\documentclass{amsart}

\usepackage[backend=biber,maxbibnames=5,maxalphanames=5,style=alphabetic,bibencoding=utf8,giveninits,url=false,isbn=false]{biblatex}
\AtBeginBibliography{\small}

\usepackage{verse}

\newlength\lineindent

\usepackage[all,hyperref]{bi-discrete}
\usepackage{cleveref}
\usepackage{mathtools}
\usepackage{algorithm2e}

\providecommand{\dx}{\, \mathrm{d}x}

\providecommand{\tria}{\mathcal{T}}
\providecommand{\cD}{\mathcal{D}}

\providecommand{\supp}{\textup{supp}}

\newcommand{\js}[1]{#1}

\usepackage{dsfont}
\usepackage{pgfplots}
\usepackage{tikz}
\usepackage{listings}
\usepackage{stmaryrd}

\usepackage{tikz}
\usepackage{pgfplots}
\usepackage{pgfplotstable} % Required for table manipulation
\pgfplotsset{
  width=.65\linewidth,
  axis background/.style={fill=black!5!white},
  grid style={densely dotted,semithick},
  legend style={
    legend columns=1,
    legend pos=outer north east
  },
  compat=newest % compatibility for old pgfplots versions
}
\pgfplotscreateplotcyclelist{MyColors2}{%
    {red,mark = *,every mark/.append style={solid,scale=0.6,fill=red}},
	{teal,mark = x,every mark/.append style={solid,scale=0.6,fill=teal}},
    {green,mark = diamond*,every mark/.append style={solid,scale=0.6,fill=green}},    
    {blue,mark = square*,every mark/.append style={solid,scale=0.6,fill=blue}},    
    {blue,mark = square*,every mark/.append style={solid,scale=0.6,fill=blue}},    
    {blue,mark = square*,every mark/.append style={solid,scale=0.6,fill=blue}},    
    }

\pgfplotscreateplotcyclelist{MyColors}{%
    {red,mark = *,every mark/.append style={solid,scale=0.6,fill=red}},
	{teal,mark = x,every mark/.append style={solid,scale=0.6,fill=teal}},
    {blue,mark = square*,every mark/.append style={solid,scale=0.6,fill=blue}},    
    {dotted,red,mark = *,every mark/.append style={solid,scale=0.6,fill=red}},
	{dotted,teal,mark = x,every mark/.append style={solid,scale=0.6,fill=teal}},
    {dotted,blue,mark = square*,every mark/.append style={solid,scale=0.6,fill=blue}},
    }

\makeatletter
\@namedef{subjclassname@2020}{%
  \textup{2020} Mathematics Subject Classification}
\makeatother

\begin{document}
\lstset{language=Python,
basicstyle=\small, % alle listings winzig drucken (meine Standardeinstellung)
keywordstyle=\color{black}\bfseries, % Schlüsselwörter fett und schwarz drucken
commentstyle=\color{blue}, % Kommentare blau drucken
stringstyle=\ttfamily, % Strings im Code Schreibmaschinenähnlich - setzt sich etwas vom Code ab
showstringspaces=false,
numbers=left, % Nummerierung der Linien links vom Code
numberstyle=\small, % Art der Nummerierung, hier z. B.: winzige Schrift
%stepnumber=5, % Intervall der Nummern, hier z.B. 5
              % falls nicht gesetzt: Alle Nummern werden ausgegeben.
numbersep=10 pt,
xleftmargin= 27pt,
%xrightmargin= 10pt
}

%\author[L.~Diening]{Lars Diening}
\author[J.~Storn]{Johannes Storn}
\address[J.~Storn]{Faculty of Mathematics \& Computer Science, Institute of Mathematics, Leipzig University, Augustusplatz 10, 04109 Leipzig, Germany}
\email{johannes.storn@uni-leipzig.de}

\keywords{randomized projection operator, Monte Carlo quadrature, load vector assembly, rough right-hand side, higher order quadrature, least-squares polynomial approximation}
\subjclass[2020]{
65N30, % Finite elements, Rayleigh–Ritz and Galerkin methods (core FEM discretization theme).
65N15, %Error bounds; a priori error estimates
65N75, %Probabilistic methods, particle methods, etc. for boundary value problems involving PDEs
65D30, %Numerical integration 
65C05 % Monte Carlo Methods
}

\thanks{The work of the author was supported by the Deutsche Forschungsgemeinschaft (DFG, German Research Foundation) -- SFB 1283/2 2021 -- 317210226.}

\title[Randomized Projection Operators onto Piecewise Polynomial Spaces]{Randomized Projection Operators onto Piecewise Polynomial Spaces}

\begin{abstract}
We introduce randomized projection operators onto piecewise polynomial spaces, defined via sampling and discrete least-squares polynomial approximations. The resulting mappings are computable for any function in $L^2$ and exhibit (almost) optimal approximation properties in $L^2$ and $H^{-1}$. As smoothers for incomplete or rough data, they yield computable finite element discretizations with optimal rates of  convergence.
\end{abstract}
\maketitle

\section{Introduction}
In recent years, the numerical approximation of elliptic PDEs with rough right-hand sides has attracted significant attention. Starting with the pioneering contributions of Veeser and Zanotti \cite{VeeserZanotti18,VeeserZanotti18b,VeeserZanotti19}, there is by now a substantial literature on smoothing and projection operators that enable quasi-optimal nonconforming, mixed, and least-squares finite element schemes in the presence of data of limited regularity; see for example \cite{FuehrerHeuerKarkulik22,CarstensenNataraj22,DieningStornTscherpel21b,Fuehrer24,CarstensenGraessleNataraj24}. Closely related questions arise in a posteriori error control, where the treatment of data-oscillation and the approximation of load functionals plays a central role \cite{KreuzerVeeser21}.
A natural but rarely addressed issue concerns the evaluation of such rough right-hand sides. Indeed, classical deterministic quadrature rules for the computation of load vectors typically require additional smoothness of the integrand such as piecewise $W^{1,\infty}$ regularity used in \cite[Thm.~33.17]{ErnGuermond21b}, see \cite{Fix72,Ciarlet2002,BabuskaBanerjeeLi11} for further results. This effect negates the benefits of smoothing operators and the resulting quasi-optimal schemes when the right-hand side $f\in L^2(\cD)$ is truly irregular. To remedy this drawback, we modify an idea of \cite{KrusePolydoridesWu19} by exploiting  randomization. 
Instead of applying the randomized quadrature directly to the load $f$, we design randomized smoothing operators $\hat{\Pi}$ that are projections onto piecewise polynomials, allowing the exact evaluation of the resulting approximated load $\hat{\Pi} f \approx f$. 
\js{For the lowest-order case, our design bases on (stratified) Monte Carlo quadrature. For higher-order cases, we combine piece-wise unweighted randomized least-squares approximations with the lowest-order operator.}
This ansatz offers two major advantages:
\begin{enumerate}
\item The independence and unbiasedness of the lowest-order randomized projection $\hat{\Pi}_0$ enables a suitable diagonalization of the $H^{-1}(\cD)$ norm (see Lemma~\ref{lem:Hm1}), leading to an expected error in $H^{-1}(\cD)$ bounded from above by the data-oscillation without any additional requirements on the smoothness beyond $f\in L^2(\cD)$.\label{itm:AA}
\item \js{For the higher-order randomized piecewise polynomial projection we obtain an expected $L^2(\cD)$ approximation error that is bounded from above by the best-approximation errors with respect to $\lVert \bigcdot \rVert_{L^p(\cD)}$ for $p>2$. We derive an explicit bound on the required number of samples. A modification of the operator allows us to control the expected $H^{-1}(\cD)$ error by the weighted norm $\lVert h_\tria \bigcdot \rVert_{L^p(\cD)}$ with local mesh size $h_\tria$.}\label{itm:BB}
\end{enumerate}
The lowest-order operator $\hat{\Pi}_0$ in \ref{itm:AA} is introduced and discussed in Section~\ref{sec:LowOrder}. The higher-order operators in \ref{itm:BB} are investigated in Section~\ref{sec:HigherOrder}. We illustrate their application as a smoother for rough right-hand sides in Section~\ref{sec:Smoother}. Numerical experiments in Section~\ref{sec:Exp} underline the theoretical findings and compare the suggested scheme to deterministic approaches. 

Throughout the paper, all random variables are defined on a probability space $(\Omega,\mathcal{F},\mathbb{P})$, and expectations $\mathbb{E}$ are taken with respect to the measure $\mathbb{P}$.

\js{\subsection*{Relation to existing literature}
While the study of randomized quadrature for assembling load vectors in finite element computations is, to the author's knowledge, limited to \cite{KrusePolydoridesWu19}, there exists a vast and rapidly growing literature on randomized quadrature rules in general. Let us hence compare our approximation results to existing findings in the literature.

In the lowest-order case, our $H^{-1}(\cD)$ analysis can be viewed as an application of Monte Carlo approximation of Hilbert-space-valued Bochner integrals as in \cite{BarthLang12,BarthLangSchwab13}, see Remark~\ref{rem:AbstrFrameworkLowOrder} for details. The essential difference to \cite{KrusePolydoridesWu19} is that the randomized quadrature therein is applied directly to the load functional. On each cell $K$ of the underlying triangulation $\tria$, it takes the form
\begin{align*}
\frac{|K|}{N_K}\sum_{i=1}^{N_K}f(X_i^K)v_h(X_i^K).
\end{align*}
Hence, every sampled value is represented by the point load $|K|f(X_i^K)\delta_{X_i^K}$. For $d\geq2$, such point loads do not belong to $H^{-1}(\cD)$ and are naturally interpreted only as functionals on the finite-dimensional test space. Correspondingly, the energy-error analysis in \cite{KrusePolydoridesWu19} relies on discrete estimates and, for direct quadrature of the load, requires additional integrability $f\in L^p(\cD)$ with $p>2$ to obtain a positive convergence rate.
By contrast, our construction replaces the singular measure $|K|\delta_{X_i^K}$ by the spatially distributed density $\indicator_K$.
The resulting randomized approximation is therefore an $H^{-1}(\cD)$-valued random variable. This regularization permits the Hilbert-space-valued Monte Carlo argument and yields in combination with Lemma~\ref{lem:indicator} the bound by the data-oscillation in Lemma~\ref{lem:Hm1}. 

Our higher-order projection operator can be seen as a two-stage randomized method. The first stage uses (a stratified version of) unweighted randomized least-squares approximations.
From the viewpoint of information-based complexity, this is a cellwise instance of approximation
from i.i.d.\ standard information; see \cite[Sec.~3.1 and 4.1]{SonnleitnerUllrich23}. Our error criterion is, however, of randomized-algorithm type, since we control the expected squared error for each input function rather than the deterministic worst-case error associated with a fixed realization of the random information; cf.~\cite[Sec.~5.3]{SonnleitnerUllrich23}.
Classical analyses of such estimators typically control the empirical Gram matrix only on an event of high probability. Expected error estimates then require truncation or contain an additional term arising from the complementary event, see for example \cite{CohenDavenportLeviatan13,MiglioratiNobileSchwerinTempone14,CohenGiovanni17,Adcock25}. We instead control the lower tail of the smallest eigenvalue down to arbitrarily small values and combine this estimate with H\"older's inequality. In this way, the contribution of nearly singular realizations is absorbed into the stronger $L^p(K)$-norm of the best-approximation error, with $p>2$, and no additional failure-probability term occurs.
This approach is closely related to the abstract analysis in \cite{Mourtada22}. Its application to the present setting is, however, not immediate. For the polynomial feature vector $\psi_K(X)\in\mathbb R^m$, we have to verify a quantitative small-ball condition uniformly over all polynomial directions and suitable moment bounds. 
These polynomial-specific arguments also yield sharper explicit conditions on the number of samples than a direct application of \cite{Mourtada22}. Note that the first stage could use alternative randomized operators such as weighted randomized least-squares approximations, which improve upon our result in the sense that one obtains best-approximation with respect to $p=2$ but require more evolved sampling procedures, see for example \cite{CohenMigliorati17,AdcockCardenas20,Migliorati21,DolbeaultCohen22} and for a general overview \cite[Secs.~3.1 and 5.3]{SonnleitnerUllrich23} and \cite[Sec.~7.3]{KriegUllrich26}. The second stage of our two-stage randomized method is a correction, motivated by the ``Fortin trick'': We use our lowest-order operator to correct the randomized least-squares approximation. 
This shares similarities with the use of control-variates, see for example \cite[Sec.~9.3]{KroeseTaimreBotev11}.
However, in our setting it is not merely a tool to improve the variance, but to restore the
cellwise mean condition which extends the $L^2(\cD)$ approximation properties to $H^{-1}(\cD)$.
}

\section{Lowest-order projection}\label{sec:LowOrder}
In this section we discuss a randomized projection $\hat{\Pi}_0 \colon L^2(\cD) \to \mathbb{P}_0(\js{\Omega;}\tria)$ onto piecewise constants via a standard Monte Carlo quadrature. 
In particular, let $f\in L^2(\cD)$ and let $(N_K)_{K\in \tria} \subset \mathbb{N}$. Given independent and uniformly distributed random variables $X_1^K,\dots,X_{N_K}^K \sim \mathcal{U}(K)$ for all $K\in \tria$, we define the piecewise constant Monte Carlo approximation
\begin{align}\label{eq:LowOrderRI}
\hat{f} \coloneqq \hat{\Pi}_0 f  \coloneqq \sum_{K\in \tria}\hat f_K \indicator_K\qquad\text{with }\hat f_K \coloneqq \frac{1}{N_K} \sum_{i=1}^{N_K} f(X_i^K).
\end{align}
The estimator $\hat{f}$ is unbiased and enjoys beneficial local approximation properties in $L^2(\cD)$, as shown in the following two lemmas. They involve the $L^2(\cD)$ orthogonal projection $\Pi_0 \colon L^2(\cD) \to \mathbb{P}_0(\tria)$ onto piecewise constants, equivalently characterized by the integral mean property
\begin{align}\label{eq:IntMeanProp}
(\Pi_0 f)|_K = f_K \coloneqq \frac{1}{|K|} \int_K f \dx \qquad\text{for all }K\in \tria.
\end{align}
\begin{lemma}[Unbiased approximation of $\Pi_0$]\label{lem:UnbiasedPi0}
For any $f\in L^2(\cD)$ one has the identity 
\begin{align*}
\mathbb{E}\big[\hat \Pi_0 f\big] = \Pi_0 f.
\end{align*}
\end{lemma}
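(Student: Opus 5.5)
The plan is to reduce the identity to a cellwise statement. Then we compute the expectation of each sample via the density of the uniform distribution, using linearity of expectation. Since $\hat\Pi_0 f = \sum_{K\in\tria} \hat f_K \indicator_K$ with deterministic functions $\indicator_K$, the expectation of the random function (understood pointwise, or as an $L^2(\cD)$-valued Bochner integral) is $\sum_{K\in\tria}\mathbb{E}[\hat f_K]\indicator_K$. By the integral mean property \eqref{eq:IntMeanProp} it therefore suffices to show $\mathbb{E}[\hat f_K] = f_K$ for every $K\in\tria$.

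For fixed $K$, linearity gives $\mathbb{E}[\hat f_K] = \frac{1}{N_K}\sum_{i=1}^{N_K}\mathbb{E}[f(X_i^K)]$. Each $X_i^K\sim\mathcal{U}(K)$ has density $|K|^{-1}\indicator_K$ with respect to Lebesgue measure. The change of variables formula for the law of a random variable then yields
\begin{align*}
\mathbb{E}[f(X_i^K)] = \frac{1}{|K|}\int_K f \dx = f_K.
\end{align*}
Averaging $N_K$ identical values gives $\mathbb{E}[\hat f_K]=f_K$, and summing over $K$ concludes the proof.

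The only delicate point is well-posedness. First, $f\in L^2(\cD)$ is only an equivalence class, so $f(X_i^K)$ must be interpreted for a fixed representative. This is harmless: $X_i^K$ is absolutely continuous, so two representatives agree almost surely and the law of $f(X_i^K)$ does not depend on the choice. Second, we need integrability, which follows from $\int_K |f|\dx \le |K|^{1/2}\lVert f\rVert_{L^2(K)}<\infty$ by Cauchy--Schwarz. Finally, the mesh is finite, so interchanging the expectation with the sum over $K\in\tria$ needs no justification. I expect no genuine obstacle beyond stating these measurability and integrability remarks.
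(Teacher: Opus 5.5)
Your proposal is correct and follows the paper's proof. Both compute $\mathbb{E}[\hat f_K] = \frac{1}{N_K}\sum_{i=1}^{N_K}\mathbb{E}[f(X_i^K)] = f_K$ via linearity of expectation and the uniform density, then conclude with the integral mean property \eqref{eq:IntMeanProp}; your remarks on the choice of representative (best taken Borel measurable) and on integrability are accurate refinements that the paper leaves implicit.
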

\begin{proof}
Let $K\in \tria$ and $f\in L^2(\cD)$. Since the random variables $X_1^K,\dots,X_{N_K}^K \sim \mathcal{U}(K)$ are uniformly distributed, one has 
\begin{align*}
\mathbb{E}\big[\hat{f}_K\big] = \frac{1}{N_K} \sum_{i=1}^{N_K} \mathbb{E}\big[ f(X_i^K)\big] =  \frac{1}{N_K}  \sum_{i=1}^{N_K} \frac{1}{|K|} \int_K f(x) \dx = \frac{1}{|K|} \int_K f\dx. 
\end{align*}
Combining this observation with \eqref{eq:IntMeanProp} concludes the proof.
\end{proof}
\begin{lemma}[Approximation properties in $L^2(K)$]\label{lem:ApxPi0L2}
One has for any $f\in L^2(\cD)$ and  $K\in \tria$ the identity
\begin{align*}
\mathbb{E}\big[\lVert f - \hat \Pi_0 f \rVert^2_{L^2(K)} \big] = \lVert f- \Pi_0 f \rVert_{L^2(K)}^2 + \mathbb{E}\big[\lVert \Pi_0 f - \hat{\Pi}_0 f \rVert^2_{L^2(K)} \big]. 
\end{align*}
The latter term equals
\begin{align*}
\mathbb{E}\big[\lVert \Pi_0 f - \hat{\Pi}_0 f \rVert^2_{L^2(K)} \big] = \frac{1}{N_K} \lVert f - \Pi_0 f \rVert_{L^2(K)}^2.
\end{align*}
\end{lemma}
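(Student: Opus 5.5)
We split the error on $K$ as $f-\hat\Pi_0 f = (f-\Pi_0 f) + (\Pi_0 f - \hat\Pi_0 f)$ and expand the square. The cross term vanishes pointwise in $\omega$: on $K$ the function $\Pi_0 f - \hat\Pi_0 f$ is the constant $f_K - \hat f_K$, while $f-\Pi_0 f$ has zero mean on $K$ by \eqref{eq:IntMeanProp}. Hence
\begin{align*}
\int_K (f-\Pi_0 f)(f_K-\hat f_K)\dx = (f_K-\hat f_K)\int_K (f-f_K)\dx = 0
\end{align*}
for every realization. This gives the Pythagorean identity almost surely. Taking expectations yields the first claim, since $\lVert f-\Pi_0 f\rVert_{L^2(K)}^2$ is deterministic. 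Lemma~\ref{lem:UnbiasedPi0} is not even needed here, because orthogonality holds realization by realization.

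For the second identity we note that $\lVert \Pi_0 f - \hat\Pi_0 f\rVert_{L^2(K)}^2 = |K|\,(f_K-\hat f_K)^2$. Therefore we need $\mathbb{E}[(\hat f_K - f_K)^2]$, which is the variance of $\hat f_K$ because $\mathbb{E}[\hat f_K]=f_K$ by the computation in the proof of Lemma~\ref{lem:UnbiasedPi0}. We write $\hat f_K - f_K = \frac{1}{N_K}\sum_{i=1}^{N_K} Y_i$ with $Y_i \coloneqq f(X_i^K)-f_K$. The $Y_i$ are independent, centered and identically distributed, so the mixed terms $\mathbb{E}[Y_iY_j]$ with $i\neq j$ vanish. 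This gives
\begin{align*}
\mathbb{E}\big[(\hat f_K - f_K)^2\big] = \frac{1}{N_K}\mathbb{E}[Y_1^2] = \frac{1}{N_K}\,\frac{1}{|K|}\int_K (f-f_K)^2\dx .
\end{align*}
Multiplying by $|K|$ yields $\frac{1}{N_K}\lVert f-\Pi_0 f\rVert_{L^2(K)}^2$.

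The only point needing care is well-definedness. Since $f\in L^2(\cD)$ is only an equivalence class, we fix any representative; the law of $f(X_i^K)$ does not depend on this choice because $X_i^K$ has a density. The random variable $Y_1$ is square integrable because $\mathbb{E}[Y_1^2]$ equals the finite integral above. This justifies the expansion and the interchange of expectation and the finite sums. Beyond this, the argument is the standard variance computation for Monte Carlo means, and I do not expect a genuine obstacle.
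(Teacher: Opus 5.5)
Your proposal is correct and follows essentially the same route as the paper. You obtain the first identity from the (here pathwise) orthogonality of $\Pi_0$ via the Pythagorean theorem, and the second from the independence and centeredness of $f(X_i^K)-f_K$: the mixed terms vanish, which gives $\mathbb{E}[|f_K-\hat f_K|^2]=N_K^{-1}|K|^{-1}\lVert f-\Pi_0 f\rVert_{L^2(K)}^2$. Your remarks on the choice of representative and on square integrability are a harmless addition that the paper leaves implicit.
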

\begin{proof}
Let $K\in \tria$.
The first identity in the lemma follows by the orthogonality of $\Pi_0$ and the Pythagorean theorem. Moreover, the independence of the random variables $X_1^K,\dots,X_{N_K}^K$ and the property $\mathbb{E}\big[f_K -  f(X_i^K)\big] = 0$ for all $i=1,\dots,N_K$, which follows by Lemma~\ref{lem:UnbiasedPi0}, yield
\begin{align*}
\mathbb{E}\big[ |f_K - \hat{f}_K|^2 \big] &= \frac{1}{N^2_K} \mathbb{E}\Big[  \Big|\sum_{i=1}^{N_K}\big(f_K -  f(X_i^K)\big)\Big|^2  \Big] = \frac{1}{N^2_K} \sum_{i=1}^{N_K} \mathbb{E}\Big[  \big|f_K -  f(X_i^K)\big|^2  \Big]\\
& = \frac{1}{N_K}\frac{1}{|K|} \int_K |f_K - f|^2\dx.
\end{align*}
This concludes the proof of the lemma's second identity. 
\end{proof}
Apart from approximation properties in $L^2(\cD)$, applying our projection operator to right-hand sides of PDEs as discussed in Section~\ref{sec:Smoother} below or to time-derivatives in parabolic problems motivates error estimates with respect to the dual norm 
\begin{align*}
\lVert g \rVert_{H^{-1}(\cD)} \coloneqq \sup_{v\in H^1_0(\cD)\setminus \lbrace 0 \rbrace} \frac{\int_\cD gv\dx}{\lVert \nabla v \rVert_{L^2(\cD)}}\qquad\text{for all }g\in L^2(\cD).
\end{align*}
For the corresponding error estimate, we exploit the following auxiliary result.
\begin{lemma}[$H^{-1}(\cD)$ norm of locally supported functions]\label{lem:indicator}
Let $g\in L^2(\cD)$ with $\supp(g) \subset K \in \tria$.
Then there exists a constant $C>0$ depending only on the domain $\cD$ and the shape-regularity of $K$ such that with 
\begin{align}\label{eq:DefVartheta}
\vartheta(h_K) \coloneqq \begin{cases}
C\, \max\lbrace 1, \ln(h_K^{-1})\rbrace^{1/2}&\text{for }d=2,\\
C &\text{for }d \geq 3
\end{cases}
\end{align}
we have
\begin{align*}
\lVert g\rVert_{H^{-1}(\cD)} \leq h_K \vartheta(h_K) \lVert g \rVert_{L^2(K)}.
\end{align*}
\end{lemma}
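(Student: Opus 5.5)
By duality, it suffices to bound $\int_K g v\dx$ for an arbitrary $v\in H^1_0(\cD)$ by $h_K\vartheta(h_K)\lVert g\rVert_{L^2(K)}\lVert \nabla v\rVert_{L^2(\cD)}$. Since $g$ is supported in $K$, Cauchy--Schwarz gives $\int_K gv\dx \leq \lVert g\rVert_{L^2(K)}\lVert v\rVert_{L^2(K)}$. The task therefore reduces to the local estimate
\begin{align*}
\lVert v\rVert_{L^2(K)} \leq h_K\vartheta(h_K)\lVert \nabla v\rVert_{L^2(\cD)}\qquad\text{for all }v\in H^1_0(\cD).
\end{align*}
This is a Poincar\'e-type inequality on a small cell in which only the full gradient on $\cD$ is available. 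We cannot subtract the mean $v_K$, because $g$ need not have vanishing integral, so the mean value $|v_K|$ must be controlled separately.

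To do so, split $v = (v - v_K) + v_K$ on $K$. The local Poincar\'e inequality gives $\lVert v-v_K\rVert_{L^2(K)}\lesssim h_K\lVert\nabla v\rVert_{L^2(K)}$, with a constant depending only on the shape regularity. For the mean part, $\lVert v_K\rVert_{L^2(K)} = |K|^{1/2}|v_K| \leq |K|^{1/2-1/q}\lVert v\rVert_{L^q(K)}$ by H\"older's inequality.

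For $d\geq3$, take $q=2^*=2d/(d-2)$. Then $|K|^{1/2-1/q} = |K|^{1/d}\eqsim h_K$, and the Sobolev embedding $H^1_0(\cD)\hookrightarrow L^{2^*}(\cD)$ gives $\lVert v\rVert_{L^{2^*}}\lesssim\lVert\nabla v\rVert_{L^2(\cD)}$. Since the Sobolev constant is scale invariant, this yields $\vartheta = C$. One can also skip the splitting and apply H\"older's inequality directly to $\lVert v\rVert_{L^2(K)}$.

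The case $d=2$ is the main obstacle, because $H^1_0$ does not embed into $L^\infty$ and the logarithm must be tracked. My plan is to use the sharp $q$-dependence of the two-dimensional Sobolev embedding: for $v\in H^1_0(\cD)$ and $2\le q<\infty$,
\begin{align*}
\lVert v\rVert_{L^q(\cD)}\leq C\sqrt{q}\,\lVert\nabla v\rVert_{L^2(\cD)},
\end{align*}
with $C$ depending only on $\cD$. This follows, for instance, from the Trudinger--Moser inequality, or from the Gagliardo--Nirenberg inequality $\lVert v\rVert_{L^q}\lesssim \sqrt q\,\lVert v\rVert_{L^2}^{2/q}\lVert\nabla v\rVert_{L^2}^{1-2/q}$ combined with the Friedrichs inequality on the bounded domain $\cD$. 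H\"older's inequality then gives
\begin{align*}
\lVert v\rVert_{L^2(K)}\leq |K|^{1/2-1/q}\lVert v\rVert_{L^q(K)}\lesssim h_K^{1-2/q}\sqrt q\,\lVert\nabla v\rVert_{L^2(\cD)}.
\end{align*}
Choose $q=\max\{2,\ln(h_K^{-1})\}$. Then $h_K^{-2/q}\le e^2$, so the bound becomes $h_K\max\{1,\ln h_K^{-1}\}^{1/2}\lVert\nabla v\rVert_{L^2(\cD)}$ up to constants. In the degenerate range where $h_K$ is not small, we simply take $q=2$ and use the Friedrichs inequality, with $h_K\gtrsim \operatorname{diam}$ bounded below in terms of $\cD$.

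An alternative for $d=2$ is a dyadic annulus argument with a logarithmic cut-off function, but the $\sqrt q$ growth of the Sobolev constant is the cleanest route. The only point requiring care is citing this growth rate correctly; I would justify it via the Gagliardo--Nirenberg inequality with explicit constant (Ladyzhenskaya-type interpolation iterated), or via the exponential integrability from Trudinger's inequality.
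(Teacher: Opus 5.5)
Your proposal is correct and matches the paper's proof: Cauchy--Schwarz, H\"older with the factor $|K|^{1/2-1/q}$, the Sobolev embedding with $q=2d/(d-2)$ for $d\geq3$, and for $d=2$ the $\sqrt{q}$-growth of the embedding constant with a logarithmic choice of $q$. The paper takes $q=\max\{2,\ln(|K|^{-1})\}$ and cites Kozono--Ogawa--Sohr for the $\sqrt{q}$-growth, which is equivalent to your choice in terms of $h_K$ and your Gagliardo--Nirenberg/Trudinger justification, and, as you note yourself, the mean-value splitting you first introduce is not needed.
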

\begin{proof}
Let $g\in L^2(\cD)$ with $\supp(g) \subset K\in \tria$. 
The Cauchy--Schwarz inequality implies
\begin{align}
\lVert g \rVert_{H^{-1}(\cD)} \coloneqq \sup_{v\in H_0^1(\cD)\setminus \lbrace 0 \rbrace}
\frac{\int_K g v\dx}{\lVert \nabla v\rVert_{L^2(\cD)}}\leq \lVert g \rVert_{L^2(K)}\,\sup_{v\in H_0^1(\cD)\setminus \lbrace 0 \rbrace}
\frac{ \lVert v \rVert_{L^2(K)}}{\lVert \nabla v\rVert_{L^2(\cD)}}.
\label{eq:indicator_dual_def}
\end{align}
H\"older's inequality yields for any $v\in H^1_0(\cD)$ and exponents $q\in[2,\infty)$
\begin{align*}
\lVert v \rVert_{L^2(K)}^2 \leq \lVert v^2 \rVert_{L^{q/2}(K)} \lVert 1 \rVert_{L^{q/(q-2)}(K)} = \lVert v \rVert_{L^{q}(K)}^2 |K|^\frac{q-2}{q}
\end{align*}
and consequently
\begin{align*}
\lVert v \rVert_{L^2(K)} \leq |K|^{\frac{1}{2} -\frac{1}{q}} \lVert v \rVert_{L^{q}(K)} \leq |K|^{\frac{1}{2} -\frac{1}{q}} \lVert v \rVert_{L^{q}(\cD)}.
\end{align*}
Inserting this bound into \eqref{eq:indicator_dual_def} leads to
\begin{align}
\lVert g\rVert_{H^{-1}(\cD)}
\leq |K|^{\frac12 - \frac{1}{q}} \lVert g \rVert_{L^2(K)} \sup_{v\in H_0^1(\cD)\setminus \lbrace 0 \rbrace}\frac{\lVert v\rVert_{L^q(\cD)}}{\lVert \nabla v\rVert_{L^2(\cD)}}.
\label{eq:indicator_holder_q}
\end{align}

 \textit{Case $d\ge 3$.} We set the exponent $q\coloneqq 2d/(d-2)$. The Sobolev embedding
$H_0^1(\cD)\hookrightarrow L^{q}(\cD)$, see e.g.~\cite[Thm.~2.31]{ErnGuermond21}, verifies the existence of a constant $C< \infty$ such that
\begin{align*}
\lVert v\rVert_{L^{q}(\cD)} \leq C\, \lVert\nabla v\rVert_{L^2(\cD)}\qquad\text{for all }v\in H^1_0(\cD).
\end{align*}
Combining this bound with \eqref{eq:indicator_holder_q} gives
\begin{align*}
\lVert g\rVert_{H^{-1}(\cD)} \leq C\,|K|^{\frac12-\frac{d-2}{2d}} \lVert g \rVert_{L^2(K)} = C\,|K|^{\frac1d}\lVert g \rVert_{L^2(K)}.
\end{align*}
The shape regularity of $K$ implies $|K|^{1/d}\simeq h_K$. This yields the existence of a constant $C<\infty$ with 
\begin{align*}
\lVert g \rVert_{H^{-1}(\cD)}\leq C\,h_K \lVert g \rVert_{L^2(K)}.
\end{align*}

\textit{Case $d=2$.}
In two dimensions the embedding constant for $H_0^1(\cD)\hookrightarrow L^q(\cD)$ grows with $\sqrt{q}$ for all $q\in [2,\infty)$; that is, with constant $C< \infty$ independent of $q$ one has \cite[Lem.~2.3]{KozonoOgawaSohr92}
\begin{align*}
\lVert v\rVert_{L^q(\cD)} \leq C\sqrt{q}\,\lVert \nabla v\rVert_{L^2(\cD)} \qquad\text{for all } v\in H_0^1(\cD)\text{ and }q\in[2,\infty).
\end{align*}
Combining this bound with \eqref{eq:indicator_holder_q} leads to
\begin{align*}
\lVert g \rVert_{H^{-1}(\cD)} \leq C\sqrt{q}\,|K|^{\frac12-\frac1q} \lVert g \rVert_{L^2(K)} .
\end{align*}
Let $q\coloneqq \max\lbrace 2, \ln(|K|^{-1})\rbrace$. If $|K|\geq e^{-2}$, then $q=2$ and hence $|K|^{-1/q}=|K|^{-1/2}\leq e$. If $|K|< e^{-2}$, then $q=\ln(|K|^{-1}) = - \ln(|K|)$ and therefore
\begin{align*}
|K|^{-\frac{1}{q}} = |K|^{\frac{1}{\ln(|K|)}} = e.
\end{align*}
In both cases, $|K|^{-1/q}\leq e$ and we obtain
\begin{align*}
\lVert g \rVert_{H^{-1}(\cD)} \leq Ce\,|K|^\frac{1}{2} \,\sqrt{\max\lbrace 2, \ln(|K|^{-1}) \rbrace }\, \lVert g \rVert_{L^2(K)}.
\end{align*}
The shape regularity of $K$ implies $|K|\simeq h_K^2$ and thus $\ln(|K|^{-1})\simeq \ln(h^{-2}_K) = 2\,\ln(h^{-1}_K)$. This yields the lemma for $d= 2$.
\end{proof}
We can now verify the following estimate.
\begin{lemma}[Expected error for cell averages]\label{lem:Hm1}
Let $f\in L^2(\cD)$.
Its piecewise constant randomized approximation $\hat \Pi_0 f$ satisfies 
\js{\begin{align*}
\mathbb E \big[ \lVert f - \hat \Pi_0 f\rVert_{H^{-1}(\cD)}^2\big] = \lVert f - \Pi_0 f\rVert_{H^{-1}(\cD)}^2 + \mathbb E \big[ \lVert \hat \Pi_0 f - \Pi_0 f\rVert_{H^{-1}(\cD)}^2\big].
\end{align*}}
The deterministic term is bounded by 
\begin{align*}
\lVert f - \Pi_0 f\rVert_{H^{-1}(\cD)}^2 \leq \pi^{-2} \sum_{K\in \tria} h^2_K\lVert f - \Pi_0 f\rVert_{L^2(K)}^2.
\end{align*}
The stochastic term satisfies
\begin{align*}
\mathbb E \big[ \lVert  \hat \Pi_0 f - \Pi_0 f\rVert_{H^{-1}(\cD)}^2\big] \leq \sum_{K\in\tria} h_K^2 \frac{\vartheta(h_K)^2}{N_K} \lVert  f-\Pi_0 f \rVert^2_{L^2(K)}.
\end{align*}
\end{lemma}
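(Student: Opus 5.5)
The argument has three parts: a Hilbert-space Pythagoras identity, a Poincaré-type bound for the deterministic term, and an independence argument for the stochastic term.

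\emph{Pythagoras identity.} Let $R\colon H^{-1}(\cD)\to H^1_0(\cD)$ be the Riesz map, so that $\lVert g\rVert_{H^{-1}(\cD)}=\lVert \nabla Rg\rVert_{L^2(\cD)}$ with $\int_\cD \nabla Rg\cdot\nabla v\dx=\int_\cD gv\dx$. Then
\begin{align*}
\lVert g\rVert_{H^{-1}(\cD)}^2=\int_\cD g\,Rg\dx .
\end{align*}
I would expand
\begin{align*}
\lVert f-\hat\Pi_0 f\rVert_{H^{-1}(\cD)}^2=\lVert f-\Pi_0 f\rVert_{H^{-1}(\cD)}^2+2\int_\cD \nabla R(f-\Pi_0 f)\cdot\nabla R(\Pi_0f-\hat\Pi_0 f)\dx+\lVert \Pi_0f-\hat\Pi_0 f\rVert_{H^{-1}(\cD)}^2 .
\end{align*}
Write $\hat\Pi_0 f-\Pi_0 f=\sum_K(\hat f_K-f_K)\indicator_K$, which is a finite sum. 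The cross term is then linear in the scalars $\hat f_K-f_K$ with deterministic coefficients $\int_K R(f-\Pi_0f)\dx$. Its expectation vanishes by Lemma~\ref{lem:UnbiasedPi0}, which gives the identity.

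\emph{Deterministic term.} For $v\in H^1_0(\cD)$, I would use $\int_\cD(f-\Pi_0f)v\dx=\sum_K\int_K (f-\Pi_0 f)(v-v_K)\dx$. The Poincaré inequality on convex cells, $\lVert v-v_K\rVert_{L^2(K)}\le \pi^{-1}h_K\lVert\nabla v\rVert_{L^2(K)}$ (Payne--Weinberger), then applies cellwise. Summing with Cauchy--Schwarz yields the bound with $\pi^{-2}$. This step assumes the cells are convex, e.g.\ simplices.

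\emph{Stochastic term.} Set $g_K\coloneqq(\hat f_K-f_K)\indicator_K$, so that
\begin{align*}
\lVert \sum_K g_K\rVert_{H^{-1}(\cD)}^2=\sum_{K,K'}\int_\cD g_K\,Rg_{K'}\dx .
\end{align*}
For $K\neq K'$ the random variables $\hat f_K-f_K$ and $\hat f_{K'}-f_{K'}$ are independent, because the samples are drawn independently on different cells. Both have mean zero, so the off-diagonal expectations vanish and
\begin{align*}
\mathbb E\big[\lVert \hat\Pi_0 f-\Pi_0 f\rVert_{H^{-1}(\cD)}^2\big]=\sum_K\mathbb E\big[\lVert g_K\rVert_{H^{-1}(\cD)}^2\big].
\end{align*}
This diagonalization is the key step. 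It only requires that the $g_K$ are $H^{-1}(\cD)$-valued and mean zero, which is why $\indicator_K$ is used rather than point masses.

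For each $K$, Lemma~\ref{lem:indicator} gives $\lVert g_K\rVert_{H^{-1}(\cD)}^2\le h_K^2\vartheta(h_K)^2\lVert g_K\rVert_{L^2(K)}^2$. Taking expectations and using the second identity of Lemma~\ref{lem:ApxPi0L2}, $\mathbb E\lVert g_K\rVert_{L^2(K)}^2=N_K^{-1}\lVert f-\Pi_0f\rVert_{L^2(K)}^2$, concludes the proof.

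The only delicate points are measurability and integrability of the random quantities. The function $f$ is only an $L^2$ class, so point values $f(X_i^K)$ make sense almost surely for a fixed representative, and $\mathbb E|\hat f_K|^2<\infty$ because $f\in L^2(K)$. The sums over $K$ are finite, so exchanging expectation and summation is harmless.
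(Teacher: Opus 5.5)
Your proposal is correct and follows essentially the same route as the paper. It uses the inverse-Laplacian (Riesz map) representation of the $H^{-1}(\cD)$ norm, kills the cross term and the off-diagonal terms $K\neq K'$ by unbiasedness and independence across cells, and then combines Lemma~\ref{lem:indicator} with Lemma~\ref{lem:ApxPi0L2} cell by cell. Your cellwise Payne--Weinberger estimate followed by a discrete Cauchy--Schwarz inequality is what the $h_K$-weighted deterministic bound requires; the paper's displayed chain instead passes through a global Cauchy--Schwarz step, which on its own would only give the factor $\max_{K\in\tria} h_K$.
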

\begin{proof}
\textit{Step 1 (Bound for the deterministic term).}
Using \Poincare's inequality on convex domains and the orthogonality of $\Pi_0$, we bound the first addend by
\begin{align*}
\lVert f - \Pi_0 f\rVert_{H^{-1}(\cD)}  &= \sup_{v\in H^1_0(\cD)\setminus \lbrace 0 \rbrace} \frac{\int_\cD (f - \Pi_0 f) (v-\Pi_0 v)\dx }{\lVert \nabla v \rVert_{L^2(\cD)}} \\
&  \leq \sup_{v\in H^1_0(\cD)\setminus \lbrace 0 \rbrace} \frac{\lVert f - \Pi_0 f\rVert_{L^2(\cD)}\lVert v - \Pi_0 v\rVert_{L^2(\cD)}}{\lVert \nabla v \rVert_{L^2(\cD)}}\\
& \leq \pi^{-1} \Big(\sum_{K\in \tria} h^2_K\lVert f - \Pi_0 f\rVert_{L^2(K)}^2\Big)^{1/2}.
\end{align*}

\textit{Step 2 (Bound for the stochastic term).}
To bound the stochastic term, we write the difference as a sum of independent cell contributions $\hat g_K \coloneqq \hat f_K-f_K \in \mathbb{R}$ for all $K\in \tria$ in the sense that 
\begin{align*}
\hat{\Pi}_0 f - \Pi_0 f = \sum_{K\in \tria} \hat g_K \indicator_K.
\end{align*}
Lemma~\ref{lem:UnbiasedPi0} shows $\mathbb E[\hat g_K]=0$ for all $K\in \tria$.
Let  $A \coloneqq -\Delta\colon H_0^1(\cD)\to H^{-1}(\cD)$ denote the negative Laplacian with inverse $A^{-1}$. 
One has for any $\xi \in H^{-1}(\cD)$ the identity 
\begin{align*} 
\lVert \xi \rVert_{H^{-1}(\cD)} &= \sup_{v\in H^1_0(\cD)\setminus\lbrace 0 \rbrace} \frac{ \langle \xi, v\rangle_\cD }{\lVert \nabla v\rVert_{L^2(\cD)}} = \sup_{v\in H^1_0(\cD)\setminus\lbrace 0 \rbrace} \frac{\int_\cD \nabla A^{-1} \xi \cdot \nabla v\dx }{\lVert \nabla v\rVert_{L^2(\cD)}}\\
& = \lVert \nabla A^{-1}\xi \rVert_{L^2(\cD)} = \langle \xi,A^{-1}\xi\rangle_\cD^{1/2}.
\end{align*}
Hence, we have
\begin{align}\label{eq:DiagTerms}
\begin{aligned}
\mathbb E\big[\lVert \hat \Pi_0 f- \Pi_0 f\rVert_{H^{-1}(\cD)}^2\big] &= \mathbb E\Big[\Big\langle \sum_{K\in \tria} \hat g_K\indicator_K, A^{-1}\sum_{L\in \tria}  \hat g_L\indicator_L\Big\rangle_\cD \Big] \\
&= \sum_{K,L \in \tria}\mathbb E[\hat g_K\hat g_L]\, \langle \indicator_K, A^{-1}\indicator_L\rangle_\cD.
\end{aligned}
\end{align}
The independence of the random variables $\hat g_K$ and their expected value of zero imply $\mathbb E[\hat g_K\hat g_L]=0$ for simplices $K,L\in \tria$ with $K\neq L$. Consequently, we have
\begin{align}\label{eq:Pasdafgwqrw}
\begin{aligned}
\mathbb E\big[ \lVert \hat \Pi_0 f - \Pi_0 f\rVert_{H^{-1}(\cD)}^2\big] &= \sum_{K\in\tria}\mathbb E[\hat g_K^2]\, \langle \indicator_K, A^{-1}\indicator_K\rangle_\cD\\
&= \sum_{K\in\tria} \frac{1}{|K| }\mathbb{E}\big[\lVert \hat{\Pi}_0 f - \Pi_0 f \rVert_{L^2(K)}^2\big]  \lVert \indicator_K\rVert_{H^{-1}(\cD)}^2.
\end{aligned}
\end{align}
Applying Lemma \ref{lem:ApxPi0L2} and Lemma~\ref{lem:indicator} with $g \coloneqq \indicator_K$  leads to the bound
\begin{align*}
\mathbb E\big[ \lVert \hat \Pi_0 f- \Pi_0 f\rVert_{H^{-1}(\cD)}^2\big] & \js{ =} \sum_{K\in\tria}\frac{\lVert \indicator_K\rVert_{H^{-1}(\cD)}^2}{N_K|K|}\lVert  f - \Pi_0 f\rVert^2_{L^2(K)}\\
& \leq  \sum_{K\in\tria} h_K^2 \frac{\vartheta(h_K)^2}{N_K} \lVert  f-\Pi_0 f \rVert^2_{L^2(K)}.
\end{align*}

\js{\textit{Step 3 (Orthogonality in expectation).}
Splitting the error results in
\begin{align*}
\mathbb E\big[\lVert f-\hat\Pi_0f\rVert_{H^{-1}(\cD)}^2\big] & = \lVert f-\Pi_0f\rVert_{H^{-1}(\cD)}^2 + \mathbb E\big[\lVert \Pi_0f-\hat\Pi_0f\rVert_{H^{-1}(\cD)}^2\big] \\
&\quad + 2\mathbb E\big[ \langle f-\Pi_0f,A^{-1}(\Pi_0f-\hat\Pi_0f)\rangle_\cD \big].
\end{align*}
Since $f-\Pi_0f$ is deterministic, Lemma~\ref{lem:UnbiasedPi0} yields
\begin{align*}
&\mathbb E\big[ \langle f-\Pi_0f,A^{-1}(\Pi_0f-\hat\Pi_0f)\rangle_\cD \big] = \langle f-\Pi_0f,A^{-1}\mathbb E[\Pi_0f-\hat\Pi_0f] \rangle_\cD = 0.
\end{align*}
Combining the identities concludes the proof.}
\end{proof}
\begin{remark}[Diagonalization]
Decompositions similar to the one in \eqref{eq:DiagTerms} result for naive deterministic interpolation operators $\mathcal{I}\colon L^2(\cD)\to \mathbb{P}_k(\tria)$ in off-diagonal entries that do not vanish. Hence, one is often restricted to using \Poincare's inequality globally; that is,
\begin{align*}
\lVert f- \mathcal{I} f \rVert_{H^{-1}(\cD)}\lesssim \textup{diam}(\cD) \lVert f- \mathcal{I}f \rVert_{L^2(\cD)}\qquad\text{for }f\in L^2(\cD).
\end{align*}
This ``loses'' a power of $h_K$. Remedying this drawback requires sophisticated designs, see for example~\cite{TantardiniVeeser16,DieningStornTscherpel21b,Fuehrer24}.
\end{remark}
\js{\begin{remark}[Abstract framework]\label{rem:AbstrFrameworkLowOrder}
The orthogonality in expectation and the identity in \eqref{eq:Pasdafgwqrw} can be interpreted in the abstract framework of Monte Carlo approximation of Hilbert-space-valued Bochner integrals; see for example \cite{BarthLang12,BarthLangSchwab13}.
Indeed, equip any $K \in \tria$ with the probability measure $\mu_K\coloneqq |K|^{-1}\dx$ and define the  mapping $\Phi_K\in L^2(K,\mu_K;H^{-1}(\cD))$ by
\begin{align*}
\Phi_K \coloneqq (f-f_K)\indicator_K.
\end{align*}
Its Bochner integral vanishes in the sense that
\begin{align*}
\int_K\Phi_K(x)\,\mathrm d\mu_K(x) = \left(\frac{1}{|K|}\int_K(f-f_K)\dx\right)\indicator_K =0.
\end{align*}
Moreover, its Monte Carlo approximation is precisely the local stochastic error,
\begin{align*}
\frac{1}{N_K}\sum_{i=1}^{N_K}\Phi_K(X_i^K) = (\hat f_K-f_K)\indicator_K = \hat g_K\indicator_K.
\end{align*}
The mean-square identity for Monte Carlo estimators of Hilbert-space-valued random variables, cf.~\cite[Lem.~4.1]{BarthLangSchwab13}, therefore yields
\begin{align*}
\mathbb E\big[\lVert \hat g_K\indicator_K\rVert_{H^{-1}(\cD)}^2\big] &= \frac{1}{N_K}\int_K \lVert\Phi_K(x)\rVert_{H^{-1}(\cD)}^2 \,\mathrm d\mu_K(x) = \frac{\lVert\indicator_K\rVert_{H^{-1}(\cD)}^2}{N_K|K|} \lVert f-\Pi_0f\rVert_{L^2(K)}^2.
\end{align*}
The independence of the samples on distinct cells then implies
\begin{align*}
\mathbb E\bigg[ \bigg\lVert\sum_{K\in\tria}\hat g_K\indicator_K \bigg\rVert_{H^{-1}(\cD)}^2 \bigg] = \sum_{K\in\tria} \mathbb E\big[ \lVert\hat g_K\indicator_K\rVert_{H^{-1}(\cD)}^2 \big].
\end{align*}
This is exactly the diagonalization in \eqref{eq:DiagTerms}--\eqref{eq:Pasdafgwqrw}.
The abstract Hilbert-space framework thus explains the probabilistic orthogonality and the factor $N_K^{-1}$. The problem-specific ingredient of the present construction is the choice of the spatial reconstruction $\indicator_K$. In contrast, direct Monte Carlo quadrature of the load would associate the sampled value $f(x)$ with the Dirac distribution $\delta_x$. For $d\geq2$, however, $\delta_x\notin H^{-1}(\cD)$, so that the corresponding integrand $x\mapsto f(x)\delta_x$ is not $H^{-1}(\cD)$-valued. Spreading the sampled value over the cell through $\indicator_K\in L^2(\cD)\subset H^{-1}(\cD)$ avoids this obstruction. Finally, Lemma~\ref{lem:indicator} provides the problem-specific localization
\begin{align*}
\lVert\indicator_K\rVert_{H^{-1}(\cD)} \leq h_K\vartheta(h_K)|K|^{1/2}.
\end{align*}
This turns the abstract variance identity into the local, mesh-scaled data-oscillation estimate of Lemma~\ref{lem:Hm1}.
\end{remark}}
We summarize our findings in the following theorem.
\begin{theorem}[Approximation properties of $\hat \Pi_0$]\label{thm:MainPi0}
The randomized projection operator $\hat{\Pi}_0 \colon L^2(\cD) \to \mathbb{P}_0(\tria)$ defined in \eqref{eq:LowOrderRI} satisfies for all $f\in L^2(\cD)$
\begin{align*}
\mathbb E \big[ \lVert f - \hat \Pi_0 f\rVert_{L^2(\cD)}^2\big] \js{=} \sum_{K\in\tria} \left(1 + \frac{1}{N_K} \right) \lVert f - \Pi_0 f\rVert_{L^2(K)}^2
\end{align*}
and,  with $\vartheta$ defined in \eqref{eq:DefVartheta}, 
\begin{align*}
\mathbb E \big[ \lVert f - \hat \Pi_0 f\rVert_{H^{-1}(\cD)}^2\big] \leq \sum_{K\in\tria} h_K^2 \left( \pi^{-2} + \frac{\vartheta(h_K)^2}{N_K}\right) \lVert  f-\Pi_0 f \rVert^2_{L^2(K)}.
\end{align*}
\end{theorem}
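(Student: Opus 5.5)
The theorem collects the earlier lemmas, so I will derive it by summing local estimates over the cells and then combining the two $H^{-1}(\cD)$ bounds.

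\emph{$L^2(\cD)$ identity.} Since the cells of $\tria$ are disjoint up to null sets, the squared error splits as
\[
\lVert f-\hat\Pi_0 f\rVert_{L^2(\cD)}^2=\sum_{K\in\tria}\lVert f-\hat\Pi_0 f\rVert_{L^2(K)}^2 .
\]
I will take expectations and use linearity of $\mathbb E$ over the finite sum. For each $K$, Lemma~\ref{lem:ApxPi0L2} gives
\[
\mathbb E\big[\lVert f-\hat\Pi_0 f\rVert_{L^2(K)}^2\big]=\lVert f-\Pi_0 f\rVert_{L^2(K)}^2+\frac{1}{N_K}\lVert f-\Pi_0 f\rVert_{L^2(K)}^2 .
\]
Summing over $K\in\tria$ yields the claimed equality with the factor $1+1/N_K$.

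\emph{$H^{-1}(\cD)$ bound.} I start from the orthogonality-in-expectation identity of Lemma~\ref{lem:Hm1}, which writes the expected error as the sum of a deterministic term and a stochastic term. The deterministic term is bounded by $\pi^{-2}\sum_K h_K^2\lVert f-\Pi_0 f\rVert_{L^2(K)}^2$. The stochastic term is bounded by $\sum_K h_K^2\vartheta(h_K)^2N_K^{-1}\lVert f-\Pi_0 f\rVert_{L^2(K)}^2$. Adding the two bounds and factoring out $h_K^2\lVert f-\Pi_0 f\rVert_{L^2(K)}^2$ cellwise gives exactly the stated estimate.

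There is no real obstacle, since all the work sits in the earlier lemmas. The only point needing care is the constant $\pi^{-1}$ in the deterministic term. It comes from Poincaré's inequality on convex cells, applied cellwise to $v-\Pi_0 v$ in Step~1 of Lemma~\ref{lem:Hm1}, and this presumes that simplices are convex. That is automatically the case, so I can use this constant as stated without further justification.
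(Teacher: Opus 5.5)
Your proposal is correct and is essentially the paper's proof. The paper also just sums the cellwise identity of Lemma~\ref{lem:ApxPi0L2} over $K\in\tria$, and for the $H^{-1}(\cD)$ estimate it adds the deterministic and stochastic bounds from the orthogonality-in-expectation identity of Lemma~\ref{lem:Hm1}. Your remark that the cellwise Poincar\'e inequality for $v-\Pi_0 v$ on convex simplices, followed by a cellwise Cauchy--Schwarz, produces the $h_K$-weighted sum is the right justification for that deterministic bound.
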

\begin{proof}
This theorem summarizes the statements of Lemma~\ref{lem:ApxPi0L2} and Lemma~\ref{lem:Hm1}.
\end{proof}
We conclude this section with a note on the practical computation of $\hat{\Pi}_0$.
\begin{remark}[Computation of uniformly distributed random variables on simplices]
Our design of randomized projection operators requires uniformly distributed random variables $X_i^K \sim \mathcal U(K)$ for each simplex $K\in\mathcal T$. It is convenient to generate these samples on the reference simplex $K_\textup{ref}$ and then map them affinely to $K$. Below, we suggest two approaches for obtaining suitable random variables.

\textbf{Reflection trick.} Let $d=2$. We draw two i.i.d.\ random variables $U,V \sim \mathcal U(0,1)$. If the point $(U,V)$ is within the reference simplex $K_\textup{ref}$, we keep it. Otherwise, we reflect $(U,V)$ through the point $(1/2,1/2)$; that is,
\begin{align*}
X \coloneqq
\begin{cases}  (U,V) &\text{for } U+V\leq 1,\\
(1-U,\,1-V) &\text{for } U+V>1.
\end{cases}
\end{align*}
The resulting point $X \sim \mathcal U(K_\textup{ref})$ is uniformly distributed. Modified approaches extend to higher dimensions when one exploits the Kuhn partition of the (hyper-) cube discussed for example in~\cite[Sec.~4.1]{Bey00} or \cite[Rem.~2.2]{DieningStornTscherpel23}.

\textbf{Dirichlet sampling.} Let $d=2$. We draw independent random variables $U_0,U_1,U_2\sim \mathcal U(0,1)$ and set
\begin{align*}
\lambda_i \coloneqq \frac{\ln(U_i)}{\ln(U_0)+\ln(U_1)+\ln(U_2)}\qquad \text{for all }i=0,1,2.
\end{align*}
Due to the exponential distribution property $-\ln(U_i) \sim \textup{Exp}(1) = \textup{Gamma}(1,1)$ for all $i=0,1,2$, the random variable $(\lambda_0,\lambda_1,\lambda_2)$ has Dirichlet distribution \cite[Sec.~4.3.1]{KroeseTaimreBotev13} 
\begin{align*}
(\lambda_0,\lambda_1,\lambda_2) \sim \textup{Dirichlet}(1,1,1).
\end{align*}
Consequently, we obtain a uniformly distributed random variable $X \sim \mathcal{U}(K_\textup{ref})$ by
\begin{align*}
X \coloneqq \lambda_0 \begin{pmatrix}
0\\
0
\end{pmatrix} + \lambda_1 \begin{pmatrix}
1\\
0
\end{pmatrix} + \lambda_2 \begin{pmatrix}
0\\
1
\end{pmatrix} = \begin{pmatrix}
\lambda_1\\ \lambda_2
\end{pmatrix}.
\end{align*}
The construction extends verbatim to $d$-simplices by using $d+1$ independent exponentials and normalizing their sum.
\end{remark}

\section{Higher-order projection}\label{sec:HigherOrder}
If the right-hand side $f\in L^2(\cD)$ has some additional smoothness, it is beneficial to approximate the right-hand side by polynomials of higher degree $k\in \mathbb{N}$; that is, in the piecewise polynomial spaces 
\begin{align*}
\mathbb{P}_k(K) &\coloneqq \lbrace p \colon K \to \mathbb{R}\colon p\text{ is polynomial of maximal degree }k\rbrace\quad \text{for }K\in \tria,\\
\mathbb{P}_k(\tria) & \coloneqq \lbrace q \in L^2(\cD)\colon q|_K\in \mathbb{P}_k(K)\text{ for all }K\in \tria\rbrace.
\end{align*}  
Similar to Theorem~\ref{thm:MainPi0}, we aim at designing a randomized projection operator $\tilde{\Pi}_k \colon L^2(\cD) \to L^2(\Omega;\mathbb{P}_k(\tria))$ that is quasi-optimal in the sense that for suitable norms $\lVert \bigcdot \rVert$ and $\normm{\bigcdot}$ and $f\in L^2(\cD)$ 
\begin{align*}
\mathbb{E}\big[ \lVert f  - \tilde{\Pi}_k f\rVert^2 \big] \lesssim \min_{p\in \mathbb{P}_k(\tria)} \normm{ f - p }^2.
\end{align*}
Since the right-hand side equals zero for piecewise polynomials $f\in \mathbb{P}_k(\tria)$, this enforces the use of an operator that is a projection onto the space $\mathbb{P}_k(\tria)$. 
Typical deterministic interpolation operators such as the Scott--Zhang operator \cite{ScottZhang90} use orthogonal basis functions as weights to obtain such a projection property. However, evaluating the resulting integrals via some stochastic quadrature rule exactly for polynomials is challenging. We therefore exploit an alternative idea, known as discrete least-squares polynomial approximation \cite{CohenDavenportLeviatan13,CohenGiovanni17,MiglioratiNobileTempone15,MiglioratiNobileSchwerinTempone14}. 
Given $M_K\in \mathbb{N}$ independent and uniformly distributed random variables $Y_1^K,\dots,Y_{M_K}^K \sim \mathcal{U}(K)$ for all $K\in \tria$ and a fixed polynomial degree $k\in \mathbb{N}$, we set for all $f\in L^2(\cD)$ and $K\in \tria$ the random variable $\tilde{\Pi}_k f \in L^2(\Omega;\mathbb{P}_k(\tria))$ by
\begin{align}\label{eq:defHatfk}
(\tilde{\Pi}_k f)|_K \coloneqq \tilde{f}^k_K\qquad\text{with } \tilde{f}^k_K \in \argmin_{p\in\mathbb P_k(K)} \frac{1}{M_K} \sum_{i=1}^{M_K} |f(Y_i^K)-p(Y_i^K)|^2.
\end{align}
Before we show that the operator $\tilde{\Pi}_k\colon L^2(\cD) \to L^2(\Omega;\mathbb{P}_k(\tria))$ is almost surely well-defined in the sense that the minimizers in \eqref{eq:defHatfk} are unique, we introduce some notation needed throughout this section: 
Let  $(\psi_{K,1},\dots,\psi_{K,m})$ with $m = \dim \mathbb{P}_k(K)$ and $K\in \tria$ denote an orthonormal basis of $\mathbb P_k(K)$ in the sense that
\begin{align}\label{eq:orthoBasis}
\frac{1}{|K|}\int_K \psi_{K,i}\psi_{K,j}\dx=\delta_{ij}\qquad\text{for all }i,j=1,\dots,m.
\end{align}
By $\psi_K$ we abbreviate the vector $\psi_K \coloneqq (\psi_{K,1},\dots,\psi_{K,m})^\top$.
Moreover, let $\tilde{G}_K \in L^2(\Omega;\mathbb{R}^{m\times m})$ denote the empirical Gram matrix
\begin{align*}
\tilde G_K \coloneqq \frac{1}{M_K}\sum_{i=1}^{M_K} \psi_K(Y_i^K)\,\psi_K(Y_i^K)^\top.
\end{align*}
Finally, we set the random vector 
\begin{align*}
\tilde{\mathbf{f}}_K \coloneqq \frac{1}{M_K}\sum_{i=1}^{M_K} f(Y_i^K) \psi_K(Y_i^K).
\end{align*}
\begin{remark}[Choice of local basis functions]
The use of an orthonormal basis is advantageous for the analysis below, but it is not required for practical computations.
\end{remark}
\begin{lemma}[Well-definedness]\label{lem:wellDefiniedness}
Let $f\in L^2(\cD)$.
The coefficients $\tilde{z}_K\in L^2(\Omega;\mathbb{R}^m)$ of a minimizer $\tilde{f}^k_K = \tilde{z}_K \cdot \psi_K$ defined in \eqref{eq:defHatfk} satisfy
\begin{align*}
\tilde G_K \tilde z_K = \tilde{\mathbf{f}}_K.
\end{align*}
If $M_K \geq m \coloneqq \dim \mathbb{P}_k(K)$ for all $K\in \tria$, the empirical Gram matrix $\tilde G_K $ is almost surely symmetric positive definite, implying the uniqueness of the minimizer 
\begin{align*}
\tilde{f}^k_K = (\tilde{G}_K^{-1} \tilde{\mathbf{f}}_K) \cdot \psi_K .
\end{align*}
\end{lemma}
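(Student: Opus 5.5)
The argument splits into two parts: the normal equations, and almost sure positive definiteness of the empirical Gram matrix.

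\textbf{Normal equations.} Every $p\in\mathbb P_k(K)$ can be written as $p=z\cdot\psi_K$ with a unique $z\in\mathbb R^m$. The discrete functional then reads
\begin{align*}
J(z)=\frac1{M_K}\sum_{i=1}^{M_K}\bigl|f(Y_i^K)-z\cdot\psi_K(Y_i^K)\bigr|^2 = z^\top\tilde G_K z-2z\cdot\tilde{\mathbf f}_K+\frac1{M_K}\sum_{i=1}^{M_K}|f(Y_i^K)|^2 .
\end{align*}
This is a convex quadratic, since $\tilde G_K$ is a sum of rank-one positive semidefinite matrices. Any minimizer $\tilde z_K$ therefore satisfies the first-order condition $\nabla J(\tilde z_K)=0$, that is, $\tilde G_K\tilde z_K=\tilde{\mathbf f}_K$.

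\textbf{Positive definiteness.} Symmetry of $\tilde G_K$ is immediate. For $z\in\mathbb R^m$ we have
\begin{align*}
z^\top\tilde G_K z=\frac1{M_K}\sum_{i=1}^{M_K}|p_z(Y_i^K)|^2 \qquad\text{with } p_z\coloneqq z\cdot\psi_K .
\end{align*}
So $\tilde G_K$ fails to be definite exactly when some nonzero polynomial $p\in\mathbb P_k(K)$ vanishes at all sample points. Equivalently, the $M_K\times m$ matrix $V\coloneqq(\psi_{K,j}(Y_i^K))_{i,j}$ has rank less than $m$. Since $M_K\ge m$, it suffices to show that the first $m$ rows are almost surely linearly independent. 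Then $\ker V=\{0\}$, and hence $\tilde G_K=M_K^{-1}V^\top V$ is positive definite.

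To prove this, I would proceed by induction over $j=1,\dots,m$ and show that $\psi_K(Y_1^K),\dots,\psi_K(Y_j^K)$ are almost surely linearly independent. Assume this holds for the first $j-1<m$ samples. Their span $S$ is a proper subspace of $\mathbb R^m$, so there is a nonzero $z\perp S$, chosen measurably as a function of $Y_1^K,\dots,Y_{j-1}^K$. The vector $\psi_K(Y_j^K)$ lies in $S$ only if $p_z(Y_j^K)=0$.

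The main, though mild, point is the following fact: the zero set of a nonzero polynomial on $K$ has Lebesgue measure zero. This holds because a nonzero real-analytic function cannot vanish on a set of positive measure; alternatively, one can use Fubini and induction on $d$ together with the fact that univariate nonzero polynomials have finitely many roots.

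Since $Y_j^K\sim\mathcal U(K)$ is independent of the previous samples, conditioning on $Y_1^K,\dots,Y_{j-1}^K$ then gives $\mathbb P(p_z(Y_j^K)=0)=0$. This completes the induction step.

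\textbf{Conclusion.} Once $\tilde G_K$ is positive definite, $J$ is strictly convex. Its minimizer is therefore unique and equals $\tilde z_K=\tilde G_K^{-1}\tilde{\mathbf f}_K$, which gives $\tilde f^k_K=(\tilde G_K^{-1}\tilde{\mathbf f}_K)\cdot\psi_K$. Finally, a countable (here finite) union over $K\in\tria$ of null events is again a null event, so the statement holds simultaneously for all cells almost surely.
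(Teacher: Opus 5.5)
Your proof is correct, but it reaches almost sure definiteness by a different route than the paper; the normal-equation part is the same (first-order optimality of the quadratic in the coefficient vector).

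The paper works with a single polynomial on $K^m$: $D(y_1,\dots,y_m)$, the determinant of the matrix with rows $\psi_K(y_i)^\top$. It shows $D\not\equiv 0$ by evaluating at unisolvent (e.g.\ Lagrange) nodes. Since the zero set of a nonzero polynomial on $\mathbb R^{dm}$ is null, $D(Y_1^K,\dots,Y_m^K)\neq 0$ almost surely.

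You instead add one sample at a time and use the null-zero-set fact only in $\mathbb R^d$, combined with independence. Your only algebraic input is that $z\cdot\psi_K\not\equiv 0$ for $z\neq 0$. Your induction thereby shows that almost every $m$-point configuration is unisolvent, rather than presupposing one.

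The paper's version is shorter and needs no conditioning. It also only requires the joint law of $(Y_1^K,\dots,Y_m^K)$ to be absolutely continuous on $K^m$, so it would survive dependent or stratified sampling. Your induction needs the conditional law of $Y_j^K$ given the earlier samples to be absolutely continuous.

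The measurable choice of $z$ you mention is harmless and can be avoided. The event $\{\psi_K(Y_j^K)\in\mathrm{span}\{\psi_K(Y_1^K),\dots,\psi_K(Y_{j-1}^K)\}\}$ is measurable, since it is a rank condition expressible through minors. For every fixed $(y_1,\dots,y_{j-1})$, its section lies in the zero set of some nonzero polynomial, so Fubini gives probability zero directly.

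Your closing remark on the finite union over $K\in\tria$ makes explicit a point the paper leaves implicit.
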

\begin{proof}
Let $f\in L^2(\cD)$ and $K\in \tria$.
We rewrite the local minimization in \eqref{eq:defHatfk} in terms of seeking coefficients $\tilde z_K \in L^2(\Omega;\mathbb{R}^m)$ with $\tilde{f}_K^k =\tilde  z_K \cdot \psi_K$ such that
\begin{align*}
\tilde z_K \in \argmin_{z\in \mathbb{R}^m} \mathcal{J}_K(z)\quad\text{with }\mathcal{J}_K(z) \coloneqq \frac1{2M_K} \sum_{i=1}^{M_K} \big| f(Y_i^K)-z \cdot \psi_K(Y_i^K)\big|^2.
\end{align*}
The first-order optimality condition reads
\begin{align*}
0&=\nabla \mathcal J_K(\tilde z_K)
= -\frac{1}{M_K}\sum_{i=1}^{M_K}\big(f(Y_i^K)-\tilde  z_K \cdot \psi_K(Y_i^K)\big)\psi_K(Y_i^K)\\
&= -\Big(\frac{1}{M_K}\sum_{i=1}^{M_K} f(Y_i^K)\,\psi_K(Y_i^K)\Big)
 + \Big(\frac{1}{M_K}\sum_{i=1}^{M_K}\psi_K(Y_i^K)\psi_K(Y_i^K)^\top\Big)\tilde  z_K.
\end{align*}
Consequently, the minimizer $\tilde z_K \in L^2(\Omega;\mathbb{R}^m)$ solves the normal equation
\begin{align*}
\tilde G_K \tilde z_K = \tilde{\mathbf{f}}_K \qquad\text{with}\qquad\tilde{\mathbf{f}}_K \coloneqq \frac{1}{M_K}\sum_{i=1}^{M_K} f(Y_i^K) \psi_K(Y_i^K).
\end{align*}
This verifies the first statement of the lemma.

Let $M_K \geq m \coloneqq \dim \mathbb{P}_k(K)$ and define for all $y=(y_1,\dots,y_m)\in K^m$ the function
\begin{align*}
D(y) \coloneqq D(y_1,\dots,y_m) \coloneqq \det\begin{pmatrix}
\psi_K(y_1)^\top\\
\vdots\\
\psi_K(y_m)^\top
\end{pmatrix}.
\end{align*}
The function $D\colon K^m \to \mathbb{R}$ is a polynomial. Since there exist unisolvent nodes $\xi_1,\dots,\xi_m\in K$ such as  Lagrange nodes, the vectors $\psi_K(\xi_1),\dots,\psi_K(\xi_m) \in \mathbb{R}^m$ are linearly independent, implying $D(\xi_1,\dots,\xi_m) \neq 0$. Consequently, the polynomial $D$ is not trivial; that is, $D\neq 0$, and thus its roots form a set of measure zero in $K^m$. Therefore the uniformly distributed random variable $Y^K = (Y^K_1,\dots,Y_m^K) \sim \mathcal{U}(K^m)$ satisfies
\begin{align*}
\mathbb{P}\big(D(Y^K) = 0\big) = 0.
\end{align*}
This shows that the first $m$ vectors $\psi_{K}(Y_1^K),\dots,\psi_{K}(Y_m^K)$ are almost surely linearly independent.
If they are linearly independent, any  $z \in \mathbb{R}^m \setminus \lbrace 0 \rbrace$ satisfies
\begin{align*}
z^\top \tilde G_K z = \frac{1}{M_K} \sum_{i=1}^{M_K} \big(z \cdot \psi_{K}(Y_i^K)\big)^2 > 0. 
\end{align*} 
This verifies that $\tilde{G}_K$ is almost surely symmetric positive definite.
\end{proof}
After clarifying that $\tilde \Pi_k$ is almost surely well-defined, we proceed with analyzing its approximation properties. Our analysis involves the $L^2$ orthogonal projection $\Pi_k\colon L^2(\cD) \to \mathbb{P}_k(\tria)$ defined via
\begin{align}\label{eq:defPik}
\int_\cD (\Pi_k f) p\dx = \int_\cD f p\dx\qquad\text{for all }f\in L^2(\cD) \text{ and }p\in \mathbb{P}_k(\tria).
\end{align} 
\begin{lemma}[Approximation properties in $L^2(\cD)$]\label{lem:L2Estimate}
Let $K\in \tria$, $k > 0$, and $M_K \geq m \coloneqq \dim \mathbb{P}_k(K)$. The operator $\tilde{\Pi}_k\colon L^2(\cD) \to L^2(\Omega;\mathbb{P}_k(\tria))$ defined in \eqref{eq:defHatfk} almost surely preserves polynomial right-hand sides in the sense that $(\tilde \Pi_k f)|_K  = f|_K$ for all $f|_K\in \mathbb{P}_k(K)$. For general $f\in L^2(\cD)$ one has
\begin{align*}
\mathbb{E}\big[\lVert f - \tilde{\Pi}_k f \rVert_{L^2(K)}^2\big] = \lVert f - \Pi_k f \rVert_{L^2(K)}^2 + \mathbb{E}\big[\lVert \tilde{\Pi}_k f - \Pi_k f \rVert_{L^2(K)}^2\big].
\end{align*}
Moreover, assuming the additional regularity $f|_K\in L^p(K)$ with $p \in (2,\infty)$ and having sufficiently many samples in the sense that $M_K \geq 4k (2(1-2/p)^{-1} + m) + 1$, one has with some constant $C_p = C(k,p,d) <\infty$ depending on  $k,p,d$ the estimate
\begin{align*}
\mathbb{E}\big[\lVert\tilde{\Pi}_k f - \Pi_k f \rVert_{L^2(K)}^2\big] \leq  C_p \frac{|K|^{1-2/p}}{M_K} \lVert f - \Pi_k f\rVert_{L^p(K)}^{2}.
\end{align*}
If $f\in L^p(\cD)$ for some $p\in (2,\infty)$ and $M_K \geq 4k (2(1-2/p)^{-1} + m) + 1$ for all $K\in \tria$, we have the global bound
\begin{align*}
\mathbb{E}\big[\lVert\tilde{\Pi}_k f - \Pi_k f \rVert_{L^2(\cD)}^2\big] \leq C_p \,|\cD|^{1-2/p}   \left( \sum_{K\in\tria} \frac{1}{M^{p/2}_K}\lVert f - \Pi_k f\rVert_{L^p(K)}^p \right)^{2/p}.
\end{align*} 
\end{lemma}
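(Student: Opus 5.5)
The proof splits into four steps: polynomial reproduction and the Pythagorean identity, an algebraic reduction of the stochastic error, the probabilistic core (Hölder plus a lower-tail bound for the smallest eigenvalue of $\tilde G_K$), and a global summation.

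\emph{Reproduction and Pythagoras.} Polynomial reproduction follows directly from Lemma~\ref{lem:wellDefiniedness}. If $f|_K=z\cdot\psi_K$, then $\tilde{\mathbf f}_K=\tilde G_K z$, and since $\tilde G_K$ is almost surely invertible, $\tilde z_K=z$. The identity is then pure orthogonality. Because $\tilde\Pi_k f-\Pi_k f\in\mathbb P_k(K)$ almost surely and $f-\Pi_k f\perp\mathbb P_k(K)$ in $L^2(K)$, Pythagoras holds pointwise in $\Omega$, and taking expectations gives the claim. Note that no unbiasedness is needed here, in contrast to Lemma~\ref{lem:Hm1}.

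\emph{Reduction of the stochastic error.} Set $e\coloneqq f-\Pi_k f$ on $K$. Linearity and reproduction give $\tilde\Pi_k f-\Pi_k f=\tilde\Pi_k e$, with coefficients $\tilde G_K^{-1}\tilde{\mathbf e}_K$, where $\tilde{\mathbf e}_K=M_K^{-1}\sum_i e(Y_i^K)\psi_K(Y_i^K)$. By the orthonormality \eqref{eq:orthoBasis},
\begin{align*}
\lVert \tilde\Pi_k e\rVert_{L^2(K)}^2=|K|\,|\tilde G_K^{-1}\tilde{\mathbf e}_K|^2\le |K|\,\lambda_{\min}(\tilde G_K)^{-2}|\tilde{\mathbf e}_K|^2 .
\end{align*}
The orthogonality of $e$ gives $\mathbb E[e(Y)\psi_K(Y)]=0$, so $\tilde{\mathbf e}_K$ is an average of i.i.d.\ centered vectors. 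Hence for any moment $r\ge 2$, Rosenthal's (or Marcinkiewicz--Zygmund) inequality yields $\mathbb E|\tilde{\mathbf e}_K|^r\lesssim M_K^{-r/2}\,\mathbb E[|e(Y)|^r|\psi_K(Y)|^r]$. Since $\lVert\psi_K\rVert_\infty\lesssim 1$ with constants depending only on $k,d$ (by affine equivalence to the reference simplex), the right-hand side is $\lesssim M_K^{-r/2}\big(|K|^{-1}\lVert e\rVert_{L^r(K)}^r\big)$.

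\emph{The probabilistic core.} I apply Hölder's inequality with exponents $p/2$ and $q=(1-2/p)^{-1}$:
\begin{align*}
\mathbb E\big[\lambda_{\min}^{-2}|\tilde{\mathbf e}_K|^2\big]\le \big(\mathbb E\,\lambda_{\min}(\tilde G_K)^{-2q}\big)^{1/q}\big(\mathbb E|\tilde{\mathbf e}_K|^{p}\big)^{2/p}.
\end{align*}
Taking $r=p$ in the previous step, the second factor is $\lesssim M_K^{-1}|K|^{-2/p}\lVert e\rVert_{L^p(K)}^2$. Multiplied by $|K|$, this gives exactly $|K|^{1-2/p}M_K^{-1}\lVert e\rVert^2_{L^p(K)}$.

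The main obstacle is showing that $\mathbb E\,\lambda_{\min}(\tilde G_K)^{-2q}$ is bounded by a constant depending only on $k,p,d$; this is where the sample condition $M_K\ge 4k(2q+m)+1$ must enter. My plan follows Mourtada's small-ball approach and has three ingredients.
\begin{itemize}
\item \emph{Uniform small-ball bound.} For every unit vector $z$, the polynomial $z\cdot\psi_K$ of degree $k$ with unit $L^2$ mean satisfies $\mathbb P(|z\cdot\psi_K(Y)|^2\le t)\le C t^{1/(2k)}$. This is a Remez/Carbery--Wright type estimate, obtained via the sublevel-set bound for degree-$k$ polynomials on the reference simplex together with compactness of the unit sphere.
\item \emph{Fixed-direction lower tail.} Independence then gives $\mathbb P(z^\top\tilde G_K z\le t)\le (Ct^{1/(2k)})^{M_K/2}$, roughly, after accounting for the fraction of samples that must be small. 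Any such control must decay like $t^{M_K/(4k)}$-ish.
\item \emph{Uniformity and integration.} I extend to $\lambda_{\min}$ via an $\varepsilon$-net argument combined with a Lipschitz/upper bound on $\tilde G_K$ (bounded since $\psi_K$ is bounded). The net costs a factor $t^{-m/(4k)}$ or so, giving $\mathbb P(\lambda_{\min}\le t)\lesssim t^{(M_K-1)/(4k)-m}$-type decay. Integrating $\mathbb E\lambda_{\min}^{-2q}=\int_0^\infty \mathbb P(\lambda_{\min}^{-2q}>s)\,ds$ converges exactly when the exponent exceeds $2q$, i.e.\ $M_K>4k(2q+m)$.
\end{itemize}
Getting these exponents right is the delicate part.

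\emph{Global bound.} I sum the local bounds over $K\in\tria$ and apply Hölder's inequality for sums with exponents $p/2$ and $q$:
\begin{align*}
\sum_K |K|^{1-2/p}a_K\le\Big(\sum_K|K|\Big)^{1-2/p}\Big(\sum_K a_K^{p/2}\Big)^{2/p},
\end{align*}
where $a_K=M_K^{-1}\lVert e\rVert_{L^p(K)}^2$. This yields the stated global estimate with $|\cD|^{1-2/p}$.
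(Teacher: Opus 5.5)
Your proposal is correct and follows the paper's proof essentially step by step: reproduction and Pythagoras, the representation $(\tilde G_K^{-1}\tilde\eta_K)\cdot\psi_K$, Marcinkiewicz--Zygmund for the centered average, Carbery--Wright with binomial counting and a $\delta$-net for the lower tail of $\lambda_{\min}(\tilde G_K)$, and H\"older over the cells; the only difference is cosmetic, since you apply H\"older once against $\mathbb E[\lambda_{\min}(\tilde G_K)^{-2q}]$ whereas the paper splits into the events $\omega_K^j$ and sums a geometric series, and both need the tail exponent of $\lVert\tilde G_K^{-1}\rVert$ to exceed $2q$, which is what $M_K\ge 4k(2q+m)+1$ provides. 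When writing it out, note that the net costs a factor $(12\Lambda_k/t)^m$ rather than $t^{-m/(4k)}$, and that the factor $C^{M_K}$ in the tail must be absorbed for small $t$ (where $Ct^{1/(4k)}\le 1$) via $t^{-m}(Ct^{1/(4k)})^{M_K}\le t^{-m}(Ct^{1/(4k)})^{N_0}$ with the minimal admissible sample size $N_0$, as in Corollary~\ref{cor:NrobustBound}, so that $C_p$ does not depend on $M_K$.
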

\begin{remark}[Best-approximation in $L^p$]
It follows by the $L^p$ stability of $\Pi_k$ and its projection property that for any $f\in L^p(\cD)$ and $K\in \tria$ the error $\lVert f - \Pi_k f\rVert_{L^p(K)}$ is bounded from above by the best-approximation error \cite{DouglasDupontWahlbin74}. More precisely, there exists some constant $C < \infty$ depending on $p \in [1,\infty]$, $k\in \mathbb{N}$, and the shape regularity of $K$ such that  
\begin{align*}
\lVert f - \Pi_k f\rVert_{L^p(K)} \leq C \min_{p\in \mathbb{P}_k(K)} \lVert f - p \rVert_{L^p(K)}.
\end{align*} 
\end{remark}
The proof of the lemma uses a constant $\Lambda_k< \infty$ defined in the following lemma.
\begin{lemma}[Christoffel-type quantity]\label{lem:ChristoffelTypeQuant}
For any $K\in \tria$ and basis $\psi_{K,1},\dots,\psi_{K,m} \in \mathbb{P}_k(K)$ with $k\in \mathbb{N}$ and \eqref{eq:orthoBasis} one has the identity
\begin{align*}
\Lambda_{k}\coloneqq \sup_{x\in K}\sum_{j=1}^m \psi_{K,j}(x)^2 = |K|\,  \sup_{x\in K} \sup_{p\in \mathbb{P}_k(K)\setminus \lbrace 0 \rbrace} \frac{p(x)^2}{\lVert p \rVert^2_{L^2(K)}} < \infty.
\end{align*}
The value $\Lambda_{k}$ depends on the polynomial degree $k$ and the dimensions $d$, but is independent of the simplex $K \in \tria$, its diameter $h_K$, and the choice of orthonormal basis functions $\psi_{K,1},\dots,\psi_{K,m} \in \mathbb{P}_k(K)$ satisfying \eqref{eq:orthoBasis}.
\end{lemma}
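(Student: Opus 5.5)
The plan is to prove the identity first, then finiteness, then independence of $K$ and of the basis.

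\emph{Identity.} Fix $x\in K$ and write $p=z\cdot\psi_K$ with $z\in\mathbb{R}^m$. By \eqref{eq:orthoBasis} we have $\lVert p\rVert_{L^2(K)}^2=|K|\,|z|^2$. Cauchy--Schwarz gives $p(x)^2=(z\cdot\psi_K(x))^2\le |z|^2\,|\psi_K(x)|^2$, with equality for $z=\psi_K(x)$. Hence
\begin{align*}
|K|\sup_{p\neq 0}\frac{p(x)^2}{\lVert p\rVert_{L^2(K)}^2}=\sum_{j=1}^m\psi_{K,j}(x)^2 .
\end{align*}
The special case $\psi_K(x)=0$ is consistent, since then both sides vanish. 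Taking the supremum over $x\in K$ yields the claimed identity.

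\emph{Independence of the basis.} The right-hand side of the identity involves no basis at all, so $\Lambda_k$ does not depend on the choice of orthonormal basis. Equivalently, any two bases satisfying \eqref{eq:orthoBasis} differ by an orthogonal matrix $Q$, and $|Q\psi_K(x)|=|\psi_K(x)|$.

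\emph{Independence of $K$.} Let $F_K\colon K_\textup{ref}\to K$, $F_K(\hat x)=B\hat x+b$, be the affine bijection onto the reference simplex. Composition $p\mapsto\hat p=p\circ F_K$ is a bijection of $\mathbb{P}_k(K)$ onto $\mathbb{P}_k(K_\textup{ref})$ and satisfies $p(x)=\hat p(\hat x)$ for $x=F_K(\hat x)$. The change of variables gives $\lVert p\rVert_{L^2(K)}^2=|\det B|\,\lVert\hat p\rVert_{L^2(K_\textup{ref})}^2$ with $|\det B|=|K|/|K_\textup{ref}|$. Therefore
\begin{align*}
|K|\,\frac{p(x)^2}{\lVert p\rVert_{L^2(K)}^2}=|K_\textup{ref}|\,\frac{\hat p(\hat x)^2}{\lVert \hat p\rVert_{L^2(K_\textup{ref})}^2}.
\end{align*}
Taking suprema, $\Lambda_k$ equals the same quantity computed on $K_\textup{ref}$. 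That quantity depends only on $k$ and $d$, so no shape regularity or dependence on $h_K$ enters. This scaling step is the one to check most carefully, because it is what makes the factor $|K|$ in the definition exactly cancel the Jacobian.

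\emph{Finiteness.} On $K_\textup{ref}$ the map $\hat x\mapsto|\psi(\hat x)|^2$ is a polynomial, hence continuous on a compact set and bounded. Alternatively, $\mathbb{P}_k(K_\textup{ref})$ is finite-dimensional, so the norms $\lVert\cdot\rVert_{L^\infty}$ and $\lVert\cdot\rVert_{L^2}$ are equivalent on it. Either argument shows $\Lambda_k<\infty$.
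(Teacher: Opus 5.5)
Your proof is correct and follows essentially the same route as the paper: Parseval's identity from the orthonormality relation, together with the Cauchy--Schwarz inequality and its equality case $z=\psi_K(x)$, gives the identity; basis-independence is read off from the basis-free right-hand side; and independence of $K$ comes from the affine pull-back to the reference simplex. You spell out the Jacobian cancellation $|\det B| = |K|/|K_\textup{ref}|$ and the finiteness argument, which the paper only sketches or leaves implicit.
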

\begin{proof}
Let $x\in K$ and $p\in \mathbb{P}_k(K)\setminus \lbrace 0 \rbrace$ with $p = \sum_{j=1}^m p_j \psi_{K,j}$ and coefficients $(p_j)_{j=1}^m \subset \mathbb{R}$. Due to the orthonormality \eqref{eq:orthoBasis} the $L^2(K)$ norm of $p$ reads
\begin{align}\label{eq:Parsival}
\lVert p \rVert^2_{L^2(K)} = \sum_{j=1}^m p_j^2 \int_K \psi^2_{K,j} \dx = |K|\,\sum_{j=1}^m p_j^2 .
\end{align} 
Moreover, its value in any $x\in K$ reads
\begin{align*}
|p(x)| = \Big| \sum_{j=1}^m p_j \psi_{K,j}(x)\Big| \leq \Big(\sum_{j=1}^m p_j^2 \Big)^{1/2}\Big(\sum_{j=1}^m \psi_{K,j}(x)^2 \Big)^{1/2}.
\end{align*}
This yields an upper bound. The fact that the Cauchy--Schwarz inequality is sharp if $p_j = \psi_{K,j}(x)$ for all $j=1,\dots,m$, leads to a lower bound and thus verifies the identity. Consequently, the constant $\Lambda_k$ is independent of the chosen basis. We obtain the independence of $\Lambda_{k}$ from the underlying simplex $K$ by mapping orthonormal basis function on the reference element to $K$.
\end{proof}
\begin{proof}[Proof of Lemma~\ref{lem:L2Estimate}]
Our definition in \eqref{eq:defHatfk} verifies in combination with the almost sure uniqueness of the minimizer discussed in Lemma~\ref{lem:wellDefiniedness} the preservation of polynomials. The Pythagorean theorem yields the first identity in the lemma. It remains to verify the bound for general $f\in L^p(\cD)$ with $p>2$ and $K\in \tria$.

\textit{Step 1 (H\"older estimate)}. 
Since the mapping in \eqref{eq:defHatfk} is linear and preserves polynomials such as $\Pi_k f$, we obtain with $\tilde{\eta}_K \coloneqq M^{-1}_K \sum_{i=1}^{M_K} \big( f- \Pi_k f\big)(Y_i^K)\, \psi_K(Y_i^K)$ almost surely the identity
\begin{align}\label{eq:ProofExt}
\tilde g_K \coloneqq (\tilde{\Pi}_k f - \Pi_k f)|_K = \big(\tilde G_K^{-1} \tilde \eta_K\big) \cdot \psi_K.
\end{align}
Hence, the approximation error is due to \eqref{eq:Parsival} almost surely bounded by 
\begin{align}\label{eq:ProofTemp222}
\lVert \tilde{\Pi}_k f - \Pi_k f\rVert^2_{L^2(K)} = \lVert \tilde G_K^{-1} \tilde \eta_K \rVert^2_{\ell^2}\, | K | \leq \lVert \tilde G_K^{-1} \rVert^2\, \lVert \tilde \eta_K\rVert^2_{\ell^2} |K|.
\end{align}
Let $c = c(d,k) \in (1, \infty)$ denote the constant from Corollary~\ref{cor:NrobustBound} in the appendix.
We define for all $j\in \mathbb{N}$ the events 
\begin{align*}
\omega_K^0 \coloneqq \lbrace 0 \leq \lVert \tilde G_K^{-1} \rVert < c \rbrace\qquad \text{and}\qquad \omega_K^j \coloneqq \lbrace c^{j} \leq \lVert \tilde G_K^{-1} \rVert < c^{j+1} \rbrace.
\end{align*}
Let $j\in \mathbb{N}_0$ be fixed. We have by definition
\begin{align*}
\lVert \tilde G_K^{-1} \tilde \eta_K \rVert_{\ell^2}^2\indicator_{\omega^j_K} \leq c^{2j+2}\,\lVert\tilde  \eta_K\rVert_{\ell^2}^2\indicator_{\omega^j_K}.
\end{align*}
Combining \eqref{eq:ProofTemp222} with this bound implies
\begin{align*}
\mathbb E\big[\lVert \tilde g_K\rVert_{L^2(K)}^2\,\indicator_{\omega^j_K}\big] \leq c^{2j+2}\,|K|\,\mathbb E \big[ \lVert \tilde \eta_K\rVert_{\ell^2}^2 \indicator_{\omega^j_K}\big].
\end{align*}
Hölder's inequality yields for any $p>2$ and $\gamma\coloneqq 1-2/p\in (0,1)$ that
\begin{align*}
\mathbb E \left[\lVert \tilde \eta_K\rVert_{\ell^2}^2\,\indicator_{\omega_K^j}\right] \leq \mathbb E[\lVert \tilde  \eta_K\rVert_{\ell^2}^{p}]^{2/p}\,\mathbb P(\omega_K^j)^{\gamma}.
\end{align*}
Hence, we obtain for all $j\in \mathbb{N}$ the bound
\begin{align}\label{eq:Prooasdsa}
\mathbb E\left[\lVert\tilde  g_K\rVert_{L^2(K)}^2\,\indicator_{\omega_K^j}\right] \leq c^2\, |K|\,c^{2j}\,\big(\mathbb E[\lVert\tilde  \eta_K\rVert_{\ell^2}^{p}]\big)^{2/p}\, \mathbb P(\omega_K^j)^{\gamma}.
\end{align}

\textit{Step 2 (Bound for $\mathbb E\big[\lVert\tilde \eta_K\rVert_{\ell^2}^{p}\big]^{2/p}$).}
Set the difference 
\begin{align*}
\tilde \delta_i \coloneqq \big( f- \Pi_k f\big)(Y_i^K)\, \psi_K(Y_i^K)\qquad\text{for all }i=1,\dots,M_K.
\end{align*}
Since $f - \Pi_k f$ is orthogonal onto piecewise polynomials (including $\psi_{K,j}$), we have
\begin{align*}
\mathbb{E}[\tilde \delta_i ] =\frac1{|K|} \int_K \big( f- \Pi_k f\big)\, \psi_K\dx  = 0\qquad\text{for all }i=1,\dots,M_K.
\end{align*}
Moreover, its $p$-th moment is bounded according to Lemma~\ref{lem:ChristoffelTypeQuant} by 
\begin{align*}
\mathbb{E}\big[\lVert \tilde \delta_i \rVert_{\ell^2}^p \big] \leq \mathbb{E}\big[ | (f - \Pi_kf)(Y_K^i) |^p \lVert \psi_K(Y_i^K) \rVert_{\ell^2}^p\big] \leq \Lambda_k^{p/2} |K|^{-1} \lVert f - \Pi_kf\rVert^p_{L^p(K)} < \infty.
\end{align*}
Hence, we can apply the Marcinkiewicz--Zygmund inequality \cite[Sec.~10.3 Thm.~2]{ChowTeicher97}, which yields in combination with the identity $\tilde{\eta}_K  = M_K^{-1} \sum_{i=1}^{M_K}\tilde  \delta_i$, and H\"older's inequality for vectors the existence of some constant $C_p< \infty$ depending solely on $p$ such that
\begin{align*}
\mathbb E\big[\lVert\tilde \eta_K\rVert_{\ell^2}^{p}\big] & = M_K^{-p} \mathbb E\Big[\Big\lVert \sum_{i=1}^{M_K}\tilde \delta_i\Big\rVert_{\ell^2}^{p}\Big] \leq C_p M_K^{-p} \mathbb E\Big[ \Big(\sum_{i=1}^{M_K}\lVert\tilde \delta_i\rVert^2_{\ell^2}\Big)^{p/2}\Big]\\
& \leq C_p M_K^{-p} \mathbb E\Big[ M_K^{p/2-1} \sum_{i=1}^{M_K}\lVert\tilde \delta_i\rVert^p_{\ell^2} \Big]  =  C_p M_K^{-p/2}\mathbb{E}\big[ \lVert\tilde  \delta_1\rVert^p_{\ell^2}\big] .
\end{align*}
Using the bound $\mathbb{E}\big[ \lVert \tilde \delta_1\rVert^p_{\ell^2}\big] \leq  \Lambda_k^{p/2} |K|^{-1} \lVert f - \Pi_k f\rVert_{L^p(K)}^p$ from above, we obtain
\begin{align}\label{eq:Proofds1}
\mathbb E\big[\lVert\tilde \eta_K\rVert_{\ell^2}^{p}\big]^{2/p} \leq C_p^{2/p}\frac{\Lambda_k}{M_K}  |K|^{-2/p} \lVert f - \Pi_k f\rVert_{L^p(K)}^{2}.
\end{align}

\textit{Step 3 (Bound for the second addend $ \mathbb P(\omega_K^j)$)}. 
Assume that $M_K \geq 4k (2/\gamma + m) + 1$.
Corollary~\ref{cor:NrobustBound} displayed in the appendix yields the existence of a constant $C = C(d,k,M_{K,0})$ with $M_K \geq M_{K,0} \coloneqq \lceil 4k (2/\gamma + m) + 1\rceil$ such that
\begin{align*}
\mathbb{P}\big(t \leq \lVert\tilde  G_K^{-1}\rVert \big) \leq C \Lambda_k^m t^{-\left(\frac{M_{K,0}}{4k}-m\right)} \leq  C \Lambda_k^m t^{-\left(\frac2\gamma + \frac{1}{4k}\right)}\qquad\text{for all }t \geq c \geq 1.
\end{align*}
The definition of $\omega_K^j$ thus leads to the bound
\begin{align}\label{eq:Proofds2}
\mathbb{P}(\omega_K^j)^\gamma \leq C^\gamma \Lambda_k^{\gamma m} c^{-j \left(2 + \frac{\gamma}{4k}\right)}\qquad\text{for all }j\in \mathbb{N}_0.
\end{align}

\textit{Step 4 (Combining the results).}
Applying the bound in  \eqref{eq:Proofds1}--\eqref{eq:Proofds2} to \eqref{eq:Prooasdsa} yields with some constant $C = C(k,d,p)$ the estimate
\begin{align*}
\mathbb E\left[\lVert\tilde  g_K\rVert_{L^2(K)}^2\,\indicator_{\omega_K^j}\right] \leq C \js{M_K^{-1}}\, |K|^{1-2/p}\,c^{-j\gamma /(4k)}  \lVert f - \Pi_k f\rVert_{L^p(K)}^{2}\quad\text{for all }j \in \mathbb{N}_0.
\end{align*}
Summing over all $j\in \mathbb{N}_0$ leads to 
\begin{align*}
\mathbb E\left[\lVert\tilde  g_K\rVert_{L^2(K)}^2\right]& = \sum_{j=0}^\infty \mathbb E\left[\lVert\tilde  g_K\rVert_{L^2(K)}^2\,\indicator_{\omega_K^j}\right]\\
& \leq C\js{M_K^{-1}}|K|^{1-2/p} \lVert f - \Pi_k f\rVert_{L^p(K)}^{2} \sum_{j=0}^\infty c^{-j\gamma/(4k)}.
\end{align*}
The sum on the right-hand side is finite, resulting in the lemma's local bound.

\textit{Step 5 (Global bound)}.
Suppose that $f\in L^p(\cD)$ for some $p>2$. Moreover, assume that for all $K\in \tria$ the number of samples satisfies $M_K \geq 4k (2/\gamma + m) + 1$. Then H\"older's inequality with $q = p/2$ and $q' = p/(p-2)$ and the local bound yield
\begin{align*}
\mathbb{E}\big[\lVert\tilde{\Pi}_k f - \Pi_k f \rVert_{L^2(\cD )}^2\big]&= \sum_{K\in \tria} \mathbb{E}\big[\lVert\tilde{\Pi}_k f - \Pi_k f \rVert_{L^2(K)}^2\big]\\
&\leq 
C_p \sum_{K\in\tria} \frac{|K|^{1-2/p}}{M_K} \lVert f - \Pi_k f\rVert_{L^p(K)}^2 \\
&\leq C_p\,  |\cD|^{1-2/p}   \Big( \sum_{K\in\tria} \frac{1}{M^{p/2}_K}\lVert f - \Pi_k f\rVert_{L^p(K)}^p \Big)^{2/p}.\qedhere
\end{align*}
\end{proof}
As shown in \eqref{eq:ProofExt}, the local contributions $\tilde g_K =  (\tilde{G}_K^{-1} \tilde{\eta}_K) \cdot \psi_K$ result from the composition of a random vector $\tilde{\eta}_K$ and a random matrix $\tilde{G}_K^{-1}$ which are dependent. This has the severe drawback that the difference $\tilde g_K$ is -- even in its first moment -- biased, as illustrated in the following example.
\begin{example}[Counterexample]\label{ex:counter_bias_mean}
Let $d=1$, $K=(0,1) = \cD$, $k=1$, $M_K=2$, and $m=\dim\mathbb{P}_1(K)=2$.
Let $f(x) = x^2$ and define its approximation $\tilde{f}$ as in \eqref{eq:defHatfk} in the sense that with random variables $Y_1,Y_2 \sim \mathcal U(0,1)$ we have
\begin{align*}
\tilde f\in\argmin_{p\in\mathbb{P}_1(K)}\frac12\sum_{i=1}^2 |f(Y_i)-p(Y_i)|^2.
\end{align*}
The minimizer $\tilde{f}$ interpolates the two data points
\begin{align*}
\tilde f(Y_i)=f(Y_i)=Y_i^2\qquad\text{with } i=1,2.
\end{align*}
Hence, it reads $\tilde f(x)=\tilde ax+\tilde b$ with constant $\tilde{a},\tilde{b}$ determined by
\begin{align*}
\tilde aY_1+\tilde b=Y_1^2\qquad \text{and}\qquad \tilde aY_2+\tilde b=Y_2^2.
\end{align*}
We obtain almost surely the representation
\begin{align*}
\tilde a=\frac{Y_1^2-Y_2^2}{Y_1-Y_2}=Y_1+Y_2
\qquad\text{and}\qquad
\tilde b=Y_1^2-\tilde aY_1=-Y_1Y_2.
\end{align*}
This yields the first moment
\begin{align*}
\int_0^1 \tilde f(x)\dx
=\int_0^1\big((Y_1+Y_2)x-Y_1Y_2\big)\dx
=\frac{Y_1+Y_2}{2}-Y_1Y_2.
\end{align*}
Taking expectations and using $\mathbb E[Y_1] = \mathbb E[Y_2]=1/2$ as well as the independence
$\mathbb E[Y_1Y_2]=\mathbb E[Y_1]\mathbb E[Y_2]=1/4$ yield
\begin{align*}
\mathbb E\left[\int_0^1 \tilde f (x)\dx\right]
=\frac{\mathbb E[Y_1]+\mathbb E[Y_2]}{2}-\mathbb E[Y_1Y_2] =\frac{1/2+1/2}{2}-\frac14 =\frac14.
\end{align*}
This shows that 
\begin{align*}
\mathbb E\left[\int_0^1 \tilde f (x)\dx\right] = \frac{1}{4} \neq \frac{1}{3}=\int_0^1 x^2\dx  = \int_0^1 f\dx  = \int_0^1 \Pi_0 f \dx.
\end{align*}
\end{example}
The unbiasedness in the first moment was a key in the diagonalization argument in Lemma~\ref{lem:Hm1}. This motivates a correction of the first moment similar to the Fortin trick, see for example \cite[Sec.~2.4]{DieningStornTscherpel22}. In particular, we use the low-order random projection operator defined in \eqref{eq:LowOrderRI} to compute the corrected randomized projection 
\begin{align}\label{eq:DefCorrectedOperator}
\hat{\tilde{\Pi}}_k f \coloneqq \tilde{\Pi}_k f - \hat \Pi_0 (\tilde{\Pi}_k f - f),
\end{align}
where the random variables $(X_i^K)_{i=1}^{N_K}$ used in the definition of $\hat{\Pi}_0$ are independent of the random variables $(Y_i^K)_{i=1}^{M_K}$ used in the definition of $\tilde{\Pi}_k$ for all $K\in \tria$.
This leads to the following result.
\begin{lemma}[Unbiased first moment]\label{lem:unbias}
For any $f\in L^2(\cD)$ and $K\in \tria$ one has
\begin{align*}
\mathbb{E}\Big[ \int_K \big(\hat{\tilde{\Pi}}_k f - f\big) \dx \Big] = \mathbb{E}\Big[ \int_K \big(\hat{\tilde{\Pi}}_k f - \Pi_k f\big) \dx \Big] = 0.
\end{align*}
\end{lemma}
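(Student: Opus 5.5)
The plan is a conditioning argument: we condition on the samples $(Y_i^K)$ that define $\tilde\Pi_k$ and then use the unbiasedness of $\hat\Pi_0$ from Lemma~\ref{lem:UnbiasedPi0} with respect to the independent samples $(X_i^K)$.

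First, the two expressions in the statement agree. By \eqref{eq:defPik}, tested with the piecewise constant $\indicator_K\in\mathbb{P}_k(\tria)$, we have $\int_K \Pi_k f\dx=\int_K f\dx$. Hence the integrands $\hat{\tilde{\Pi}}_k f - f$ and $\hat{\tilde{\Pi}}_k f - \Pi_k f$ have the same integral over $K$ pointwise in $\Omega$, and it suffices to prove the first identity.

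Next, fix a realization of $(Y_i^K)$ for which $\tilde\Pi_k f$ is well defined; by Lemma~\ref{lem:wellDefiniedness} this holds almost surely. For such a realization set $r\coloneqq \tilde\Pi_k f - f\in L^2(\cD)$. It is a deterministic function, since $\tilde\Pi_k f$ is piecewise polynomial and hence in $L^2$. By \eqref{eq:DefCorrectedOperator} we have $\hat{\tilde{\Pi}}_k f - f = r - \hat\Pi_0 r$. The samples $(X_i^K)$ are independent of $(Y_i^K)$, so Lemma~\ref{lem:UnbiasedPi0} applied to $r$ gives
\begin{align*}
\mathbb{E}\Big[\int_K (r-\hat\Pi_0 r)\dx \,\Big|\, (Y_i^K)\Big] = \int_K (r - \Pi_0 r)\dx = 0,
\end{align*}
where the last equality is the integral mean property \eqref{eq:IntMeanProp}. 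Taking the outer expectation via the tower property then yields the claim.

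The main obstacle is a technical one: the outer expectation has to be well defined, that is, $\int_K(\hat{\tilde{\Pi}}_k f - f)\dx$ must be integrable. Conditionally on $(Y_i^K)$, the function $r-\hat\Pi_0 r$ has integral $|K|(r_K-\hat r_K)$. Its conditional absolute mean is controlled by $\lVert r-\Pi_0 r\rVert_{L^2(K)}\lesssim \lVert \tilde\Pi_k f - \Pi_k f\rVert_{L^2(K)}+\lVert f-\Pi_k f\rVert_{L^2(K)}$, using Lemma~\ref{lem:ApxPi0L2}. Integrability therefore follows once $\mathbb{E}\lVert\tilde\Pi_k f-\Pi_k f\rVert_{L^2(K)}<\infty$, which Lemma~\ref{lem:L2Estimate} guarantees under its sampling assumptions. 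Alternatively, one can work with the conditional expectation, which vanishes identically, and note that the statement is understood for $\tilde\Pi_k f\in L^2(\Omega;\mathbb{P}_k(\tria))$, as in the paper's setting. With integrability in hand, Fubini's theorem applied to the product of the independent sample spaces justifies the conditioning step.
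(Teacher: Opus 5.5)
Your proof is correct and follows the paper's argument. You condition on $\sigma(Y_1^K,\dots,Y_{M_K}^K)$ and use the independence of the $X_i^K$ so that the conditional mean of the cell average of $\hat\Pi_0(\tilde\Pi_k f - f)$ equals the exact cell mean of $\tilde\Pi_k f - f$. You then conclude by the tower property and obtain the second identity from $\int_K f\dx=\int_K \Pi_k f\dx$.

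Your integrability remark is more careful than the paper, which tacitly assumes $\tilde\Pi_k f\in L^2(\Omega;\mathbb{P}_k(\tria))$. Note, however, that the route via Lemma~\ref{lem:L2Estimate} needs $f\in L^p(K)$ with $p>2$ and the sample-size condition. For general $f\in L^2(\cD)$, as the lemma is stated, it is your second justification that matches the paper's setting.
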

\begin{proof}
Let $K\in\tria$ and $f\in L^2(\cD)$. We define the random variables $\tilde{\delta} \coloneqq f - \tilde{\Pi}_k f$ and $\hat{\tilde{\delta}}_K \coloneqq \big(\hat\Pi_0 \tilde{\delta}\big)|_K$; that is,
\begin{align*}
\hat{\tilde{\delta}}_K = \frac1{N_K}\sum_{i=1}^{N_K} \tilde{\delta}(X_i^K).
\end{align*}
Let $\mathcal{G}_K^Y \coloneqq \sigma(Y_1^K,\dots,Y_{M_K}^K)$ be the $\sigma$-algebra generated by the samples defining $\tilde f_K^k \coloneqq (\tilde{\Pi}_k f)|_K$. Then $\tilde f^k_K$ and hence $\tilde \delta_K$ are $\mathcal{G}^Y_K$-measurable, while $X_1^K,\dots,X_{N_K}^K$ are independent of $\mathcal{G}^Y_K$ by assumption.
Conditioning on $\mathcal{G}^Y_K$ leads to
\begin{align*}
\mathbb{E}\big[\hat{\tilde{\delta}}_K\mid \mathcal{G}^Y_K\big]& =\frac1{N_K}\sum_{i=1}^{N_K}\mathbb{E}\big[\tilde \delta(X_i^K)\mid \mathcal{G}^Y_K\big] = \frac1{|K|}\int_K \tilde\delta(x)\dx.
\end{align*}
Combining this equality with the tower property results in
\begin{align*}
\mathbb E\big[\hat{\tilde\delta}_K\big]  = \mathbb E\Big[\mathbb E\big[\hat{\tilde\delta}_K\mid \mathcal G^Y_K\big]\Big] = \frac1{|K|}\,\mathbb E\Big[\int_K (f - \tilde{\Pi}_k f)\dx\Big].
\end{align*}
This and the definition in \eqref{eq:DefCorrectedOperator}, which yields $ f|_K - (\hat{\tilde{\Pi}}_k f)|_K = \tilde{\delta}|_K - \hat{\tilde{\delta}}_K$, lead to
\begin{align*}
\mathbb E\Big[\int_K (f-\hat{\tilde{\Pi}}_k f)\dx\Big] = \mathbb E\Big[\int_K ( f - \tilde{\Pi}_kf)\dx\Big] - |K|\,\mathbb E\big[\hat{\tilde\delta}_K\big] =0.
\end{align*}
This yields the first statement of the lemma. The second one follows by the identity
\begin{align*}
&\int_K f \dx = \int_K \Pi_k f\dx.\qedhere
\end{align*}
\end{proof}
The unbiased first moment allows us to apply as in Lemma~\ref{lem:ApxPi0L2} a diagonalization argument, leading to the localization of the $H^{-1}(\cD)$ norm displayed in the following.
\begin{lemma}[Approximation properties in $H^{-1}(\cD)$]\label{lem:ApxHminHigherOrder}
Let $f\in L^2(\cD)$. Then the expected $H^{-1}(\cD)$ error splits into
\begin{align*}
\mathbb{E}\big[\lVert f-\hat{\tilde{\Pi}}_k f\rVert_{H^{-1}(\cD)}^2 \big] \leq 2\, \lVert f - \Pi_k f \rVert_{H^{-1}(\cD)}^2 + 2\, \mathbb{E}\big[\lVert \Pi_k f-\hat{\tilde{\Pi}}_k f\rVert_{H^{-1}(\cD)}^2 \big].
\end{align*}
The deterministic term is controlled by 
\begin{align*}
\lVert f - \Pi_k f \rVert_{H^{-1}(\cD)}^2 \leq \sum_{K\in \tria} \pi^{-2} h_K^2 \lVert f - \Pi_k f\rVert^2_{L^2(K)}. 
\end{align*}
The expectation satisfies, with constant $\vartheta(h_K)$ defined for all $K\in \tria$ in \eqref{eq:DefVartheta},
\js{\begin{align*}
\mathbb{E}\big[\lVert \Pi_k f-\hat{\tilde{\Pi}}_k f\rVert_{H^{-1}(\cD)}^2 \big] & \leq \sum_{K\in \tria} \pi^{-2} h_K^2 \mathbb{E}\big[ \lVert (1-\Pi_0) (\Pi_k f-{\tilde{\Pi}}_k f)\rVert_{L^2(K)}^2 \big]\\
&\quad  + \sum_{K\in \tria} h_K^2 \frac{\vartheta(h_K)^2}{N_K} \mathbb{E}\big[ \lVert (1-\Pi_0) (f-\tilde{\Pi}_k f) \rVert_{L^2(K)}^2\big].
\end{align*}}%
\end{lemma}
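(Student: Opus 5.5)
The first inequality follows from the triangle inequality and $(a+b)^2\le 2a^2+2b^2$. The deterministic bound is Step~1 of the proof of Lemma~\ref{lem:Hm1} verbatim. Since $f-\Pi_k f$ is $L^2$-orthogonal to $\mathbb{P}_0(\tria)\subset\mathbb{P}_k(\tria)$, we can insert $v-\Pi_0 v$ and then apply the elementwise Poincar\'e inequality on convex simplices.

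For the stochastic term, I will split the error into a part with vanishing cellwise means and a piecewise constant part. By \eqref{eq:DefCorrectedOperator}, on each $K$,
\begin{align*}
\Pi_k f-\hat{\tilde\Pi}_k f = (\Pi_k f-\tilde\Pi_k f) + \hat\Pi_0(\tilde\Pi_k f - f).
\end{align*}
Write $\Pi_k f-\tilde\Pi_k f = (1-\Pi_0)(\Pi_k f-\tilde\Pi_k f) + \Pi_0(\Pi_k f-\tilde\Pi_k f)$. Since $\Pi_0\Pi_k f=\Pi_0 f$, this gives
\begin{align*}
\Pi_k f-\hat{\tilde\Pi}_k f = \underbrace{(1-\Pi_0)(\Pi_k f-\tilde\Pi_k f)}_{=:a} + \underbrace{(\hat\Pi_0-\Pi_0)(\tilde\Pi_k f - f)}_{=:b}.
\end{align*}
The term $a$ has zero mean on every cell. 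Exactly as in Step~1 of Lemma~\ref{lem:Hm1}, we therefore get pathwise $\|a\|_{H^{-1}}^2\le \pi^{-2}\sum_K h_K^2\|a\|_{L^2(K)}^2$, and taking expectations yields the first sum.

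The term $b=\sum_K \hat b_K\indicator_K$ with $\hat b_K = \frac1{N_K}\sum_i \tilde\delta(X_i^K) - \tilde\delta_K$, where $\tilde\delta\coloneqq\tilde\Pi_k f-f$ and $\tilde\delta_K$ is its cell mean. I will condition on $\mathcal G^Y\coloneqq\sigma(\text{all }Y_i^K)$. Given $\mathcal G^Y$, the function $\tilde\delta$ is a fixed $L^2$ function and the $X$-samples are independent of it. Applying Lemma~\ref{lem:Hm1}, Step~2, together with Lemma~\ref{lem:ApxPi0L2} conditionally then gives
\begin{align*}
\mathbb E\big[\|b\|_{H^{-1}}^2\mid\mathcal G^Y\big] \le \sum_K h_K^2\frac{\vartheta(h_K)^2}{N_K}\|(1-\Pi_0)\tilde\delta\|_{L^2(K)}^2.
\end{align*}
The tower property then produces the second sum. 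This works because the samples on distinct cells are conditionally independent and $\mathbb E[\hat b_K\mid\mathcal G^Y]=0$.

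The main obstacle is the cross term $2\mathbb E\langle a,A^{-1}b\rangle$: the statement has coefficient one in front of each sum, so we cannot simply absorb it with another factor $2$. Here I use that $a$ is $\mathcal G^Y$-measurable, so that
\begin{align*}
\mathbb E\langle a,A^{-1}b\rangle=\mathbb E\langle a,A^{-1}\mathbb E[b\mid\mathcal G^Y]\rangle=0,
\end{align*}
because $\mathbb E[b\mid\mathcal G^Y]=0$. This is the orthogonality-in-expectation argument of Step~3 of Lemma~\ref{lem:Hm1} in conditional form. Hence $\mathbb E\|a+b\|^2_{H^{-1}}=\mathbb E\|a\|^2_{H^{-1}}+\mathbb E\|b\|^2_{H^{-1}}$, which gives the claimed bound. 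Integrability is guaranteed once the relevant second moments are finite, which holds under the assumptions of Lemma~\ref{lem:L2Estimate}; otherwise the right-hand side is infinite and there is nothing to prove.
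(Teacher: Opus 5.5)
Your proposal is correct and follows the paper's proof. You use the same split of $\Pi_k f-\hat{\tilde\Pi}_k f$ into the cellwise mean-free part $(1-\Pi_0)(\Pi_k f-\tilde\Pi_k f)$, bounded pathwise by the elementwise Poincar\'e inequality, and the piecewise constant part $(\Pi_0-\hat\Pi_0)(f-\tilde\Pi_k f)$, bounded by Lemma~\ref{lem:Hm1} conditionally on the $Y$-samples plus the tower property; your explicit argument that $\mathbb{E}\langle a,A^{-1}b\rangle=0$ (because $a$ is $\mathcal G^Y$-measurable and $\mathbb E[b\mid\mathcal G^Y]=0$) supplies a step the paper uses only tacitly when it splits the stochastic term with coefficient one on both parts.
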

\begin{proof}
The decomposition of the error and the deterministic bound follow as in Lemma~\ref{lem:Hm1}. To obtain the lemma's remaining inequality for the expectation, we \js{split the expectation into 
\begin{align}\label{eq:Proofasdsadsadsdsa}
\begin{aligned}
\mathbb{E}\big[\lVert \Pi_k f-\hat{\tilde{\Pi}}_k f\rVert_{H^{-1}(\cD)}^2 \big] &\leq  \mathbb{E}\big[\lVert (1-\Pi_0) (\Pi_k f-\hat{\tilde{\Pi}}_k f)\rVert_{H^{-1}(\cD)}^2 \big]\\
&\quad +  \mathbb{E}\big[\lVert \Pi_0( \Pi_k f-\hat{\tilde{\Pi}}_k f)\rVert_{H^{-1}(\cD)}^2 \big].
\end{aligned}
\end{align}}
Using again the arguments from the proof of Lemma~\ref{lem:Hm1} and the property $(1-\Pi_0) \hat{\tilde{\Pi}}_k = (1-\Pi_0) \tilde{\Pi}_k$, we obtain the bound
\begin{align*}
\lVert (1-\Pi_0) (\Pi_k f-\hat{\tilde{\Pi}}_k f)\rVert_{H^{-1}(\cD)}^2 \leq \sum_{K\in \tria} \pi^{-2} h_K^2 \lVert (1-\Pi_0) (\Pi_k f-{\tilde{\Pi}}_k f)\rVert_{L^2(K)}^2.
\end{align*}
To bound the latter term in \eqref{eq:Proofasdsadsadsdsa}, we observe that with $\tilde{\delta}\coloneqq f- \tilde \Pi_k f$
\begin{align*}
f - \hat{\tilde{\Pi}}_k f = \tilde{\delta} - \hat{\Pi}_0  \tilde{\delta} \qquad\text{and}\qquad \Pi_0(f - \hat{\tilde{\Pi}}_k f) = \Pi_0 \tilde{\delta}  - \hat{\Pi}_0 \tilde \delta.
\end{align*}
Hence, Lemma~\ref{lem:Hm1} leads (using conditional expectation and the tower property as in the proof of Lemma~\ref{lem:unbias}) to the bound
\begin{align*}
\mathbb{E}\big[\lVert \Pi_0( \Pi_k f-\hat{\tilde{\Pi}}_k f)\rVert_{H^{-1}(\cD)}^2 \big] & = \mathbb{E}\big[\lVert \Pi_0\tilde \delta - \hat \Pi_0 \tilde \delta\rVert_{H^{-1}(\cD)}^2 \big]\\
& \leq \sum_{K\in \tria} h_K^2 \frac{\vartheta(h_K)^2}{N_K} \mathbb{E}\big[ \lVert (1-\Pi_0) \tilde{\delta} \rVert_{L^2(K)}^2\big]. \qedhere
\end{align*}
\end{proof}
We summarize this section's result in the following theorem.
\begin{theorem}[Approximation properties of $ \tilde \Pi_k$ and $\hat{\tilde{\Pi}}_k$]\label{thm:MainPik}
The randomized projection operator $\tilde \Pi_k$ defined in \eqref{eq:defHatfk} is an almost surely well-defined projection onto $\mathbb{P}_k(\tria)$. For any $f\in L^p(\cD)$ with $p\in (2,\infty)$ it satisfies, with a constant $C_p < \infty$ depending solely on $p,k,d$ and under the assumption that the number of samples satisfies $M_K \geq 4k(2(1-2/p)^{-1} + m) +1$ on each element $K\in \tria$, 
\begin{align*}
\mathbb{E}\big[\lVert f - \tilde{\Pi}_k f\rVert_{L^2(\cD)}^2 \big] \leq \lVert f - \Pi_k f\rVert_{L^2(\cD)}^2 + C_p \sum_{K\in \tria} \frac{|K|^{1-2/p}}{M_K} \lVert f - \Pi_k f\rVert_{L^p(K)}^2.
\end{align*}
Furthermore, its modification $\hat{\tilde{\Pi}}_k$ satisfies under the same assumptions
\begin{align*}
&\mathbb{E}\big[\lVert f - \hat{\tilde{\Pi}}_k f\rVert_{H^{-1}(\cD)}^2 \big]  \leq 2 \sum_{K\in \tria}h_K^2 
\left(\pi^{-2} + \frac{\vartheta(h_K)^2}{N_K}\right) \lVert f - \Pi_k f\rVert_{L^2(K)}^2\\
&\qquad\qquad\qquad + 2 \sum_{K\in \tria}h_K^2 \left( \pi^{-2}  + \frac{\vartheta(h_K)^2}{N_K}\right) \frac{C_p |K|^{1-2/p}}{M_K} \lVert f - \Pi_k f\rVert^2_{L^p(K)}  .
\end{align*}
\end{theorem}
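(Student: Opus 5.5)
The theorem is essentially a summary, so the plan is to assemble it from Lemmas~\ref{lem:wellDefiniedness}, \ref{lem:L2Estimate} and \ref{lem:ApxHminHigherOrder}. First, well-definedness and the projection property follow from Lemma~\ref{lem:wellDefiniedness}, since the assumed sample bound implies $M_K\geq m$, together with the polynomial preservation in Lemma~\ref{lem:L2Estimate}. For the $L^2(\cD)$ bound, I will sum the local Pythagorean identity of Lemma~\ref{lem:L2Estimate} over $K\in\tria$. I then insert the local estimate $\mathbb E[\lVert \tilde\Pi_k f-\Pi_k f\rVert_{L^2(K)}^2]\leq C_p|K|^{1-2/p}M_K^{-1}\lVert f-\Pi_kf\rVert_{L^p(K)}^2$ on each cell. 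This is legitimate because $f|_K\in L^p(K)$ for every $K$ whenever $f\in L^p(\cD)$.

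For the $H^{-1}(\cD)$ bound, I start from Lemma~\ref{lem:ApxHminHigherOrder}. The deterministic term there already has the form $\sum_K 2\pi^{-2}h_K^2\lVert f-\Pi_kf\rVert_{L^2(K)}^2$, which is dominated by the first sum of the claim. The stochastic term consists of two pieces, and each must be reduced to the quantities appearing in the claim.

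In the first piece, I bound $\lVert(1-\Pi_0)(\Pi_kf-\tilde\Pi_kf)\rVert_{L^2(K)}\leq \lVert \Pi_kf-\tilde\Pi_kf\rVert_{L^2(K)}$, using that $1-\Pi_0$ is an $L^2(K)$-orthogonal projection. Lemma~\ref{lem:L2Estimate} then bounds this by $C_p|K|^{1-2/p}M_K^{-1}\lVert f-\Pi_kf\rVert_{L^p(K)}^2$.

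In the second piece, I again drop $1-\Pi_0$ and use the splitting $f-\tilde\Pi_kf=(f-\Pi_kf)+(\Pi_kf-\tilde\Pi_kf)$. Because $\Pi_k f - \tilde\Pi_k f$ lies in $\mathbb P_k(K)$, it is $L^2(K)$-orthogonal to $f-\Pi_kf$, so no factor $2$ is lost here. Taking expectations, the Pythagorean identity of Lemma~\ref{lem:L2Estimate} gives
\[
\mathbb E\big[\lVert f-\tilde\Pi_kf\rVert_{L^2(K)}^2\big]=\lVert f-\Pi_kf\rVert_{L^2(K)}^2+\mathbb E\big[\lVert\Pi_kf-\tilde\Pi_kf\rVert_{L^2(K)}^2\big].
\]
Inserting the local $L^p$ bound for the last term and multiplying by $h_K^2\vartheta(h_K)^2/N_K$ produces the $\vartheta^2/N_K$ contributions in both sums of the claim.

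Finally, I collect terms with the prefactor $2$ from Lemma~\ref{lem:ApxHminHigherOrder}. The $L^2$ data terms carry the coefficient $h_K^2(\pi^{-2}+\vartheta^2/N_K)$, and the $L^p$ terms carry the coefficient $h_K^2(\pi^{-2}+\vartheta^2/N_K)C_p|K|^{1-2/p}/M_K$.

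I expect no real obstacle, since all the analytic work lives in the earlier lemmas. The only delicate points are that the bound of Lemma~\ref{lem:L2Estimate} is applied inside the second expectation of Lemma~\ref{lem:ApxHminHigherOrder}, where $\tilde\delta$ depends only on the $Y$-samples and so the local estimate holds unchanged, and that constants are tracked consistently.
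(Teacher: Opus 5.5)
Your proposal is correct and follows the paper's proof, which simply combines Lemmas~\ref{lem:wellDefiniedness}, \ref{lem:L2Estimate}, and~\ref{lem:ApxHminHigherOrder}. Your bookkeeping reproduces exactly the stated coefficients: dropping $1-\Pi_0$ as an $L^2(K)$-orthogonal projection, splitting $f-\tilde\Pi_k f$ orthogonally into $(f-\Pi_k f)+(\Pi_k f-\tilde\Pi_k f)$, and inserting the local $L^p$ bound is precisely what the paper's one-line combination leaves implicit.
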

\begin{proof}
This theorem combines Lemma~\ref{lem:wellDefiniedness}, \ref{lem:L2Estimate}, and \ref{lem:ApxHminHigherOrder}.
\end{proof}

\begin{remark}[Alternative finite element spaces]
We only discussed randomized mappings $\tilde{\Pi}$ onto piecewise polynomial spaces $\mathbb{P}_k(\tria)$. In some applications it might be beneficial to map into finite element spaces $V_h$ such as the Lagrange finite element space. In this case we can combine our randomized projection operator with a deterministic interpolation operator $\mathcal{I}\colon L^2(\cD) \to V_h$ in the sense that we define 
\begin{align*}
\tilde{\mathcal{I}} \coloneqq \mathcal{I} \circ \tilde{\Pi} \colon L^2(\cD) \to V_h.
\end{align*}
Under the assumption that $\mathcal{I}|_{\mathbb{P}_k(\tria)}$ can be evaluated exactly, as for example in the case of Scott--Zhang-type operators with polynomial weights, this randomized projection operator is computable for any right-hand side $f\in L^2(\cD)$. Moreover, a triangle inequality reveals for any norm $\lVert \bigcdot \rVert$ that
\begin{align*}
\lVert f - \tilde{\mathcal{I}} f\rVert \leq  \lVert f - \mathcal{I} f\rVert + \lVert \mathcal{I} \rVert \,  \lVert f - \tilde{\Pi} f\rVert.
\end{align*}
Hence, $L^2(\cD)$ stability as in \cite{ScottZhang90} or even $H^{-1}(\cD)$ stability as in \cite{DieningStornTscherpel21b} allows us to obtain a randomized projection operator onto $V_h$ with expected randomized approximation error in $L^2(\cD)$ or $H^{-1}(\cD)$ bounded by the interpolation error plus the (almost) optimal approximation error of $\tilde{\Pi}$.
\end{remark}

\section{Application: Smoothers for rough right-hand sides}\label{sec:Smoother}
In this section we illustrate the application of the randomized projection operators $\hat{\Pi} \in \lbrace \hat{\tilde \Pi}_k\colon k\in \mathbb{N}\rbrace \cup \lbrace \hat \Pi_0\rbrace$ as smoothers for right-hand sides of partial differential equations. The Poisson model problem serves as our prototypical example: Given a bounded Lipschitz domain $\cD\subset \mathbb{R}^d$ for $d\geq 2$ and a right-hand side $f\in L^2(\cD)$, we seek the solution $u\in H^1_0(\cD)$ to
\begin{align}\label{eq:Laplace}
\int_\cD \nabla u \cdot \nabla v\dx = \int_\cD f v\dx \qquad\text{for all }v\in H^1_0(\cD).
\end{align}
The finite element approximation $u_h \in V_h$ with finite dimensional space $V_h$ such as the Lagrange finite element space is defined by
\begin{align}\label{eq:PmpVh}
\int_\cD \nabla u_h\cdot \nabla v_h \dx = \int_\cD f v_h\dx \qquad\text{for all }v_h \in V_h.
\end{align}
Instead of solving the discretized problem in \eqref{eq:PmpVh}, we seek $\hat{u}_h \in L^2(\Omega;V_h)$ with 
\begin{align}\label{eq:PmpPiK}
\int_\cD \nabla \hat{u}_h\cdot \nabla v_h \dx = \int_\cD \hat{\Pi} f\, v_h\dx \qquad\text{for all }v_h \in V_h.
\end{align}
The need for our randomized projection operator as smoother is twofold. From a practical point of view, the load $f\in L^2(\cD)$ is often solely known in a finite number of (randomly placed) points per cell $K\in \tria$ due to finitely many measurements -- fitting directly in the framework of the randomized projection operator $\hat{\Pi}$. From a theoretical point of view, standard a priori error analysis yields with maximal mesh size $h\coloneqq \max_{T\in \tria} h_T$ and convex domains $\cD$ the error estimate
\begin{align*}
\lVert \nabla u - \nabla u_h \rVert_{L^2(\cD)} \lesssim h\, \lVert f \rVert_{L^2(\cD)}.
\end{align*}
However, this bound tacitly assumes that the load $\int_\cD f v_h \dx$ can be evaluated
exactly for all $v_h \in V_h$. In practice, $f$ must be approximated numerically. 
Classical deterministic quadrature rules for the computation of load vectors typically require additional smoothness of the integrand such as piecewise $W^{1,\infty}(K)$ regularity for all $K\in \tria$ used in \cite[Thm.~33.17]{ErnGuermond21b}, see \cite{Fix72,Ciarlet2002,BabuskaBanerjeeLi11} for further results.
In other words, deterministic quadrature requires additional smoothness of the right-hand side far beyond $f\in L^2(\cD)$. 
A remedy is (stratified) Monte Carlo quadrature. This approach leads to an approximation $\hat{u}_h \in L^2(\Omega;V_h)$ that satisfies with maximal mesh size $h> 0$ and $d= 2$ \cite[Thm.~3.5]{KrusePolydoridesWu19}
\begin{align*}
\mathbb{E}\big[\lVert \nabla u - \nabla \hat u_h \rVert_{L^2(\cD)}\big] \lesssim \begin{cases}
 h\, \lVert f \rVert_{L^2(\cD)} + h^{1-2/p} \lVert f \rVert_{L^p(\cD)}&\text{for }p\in [2,\infty),\\
  h\, \lVert f \rVert_{L^2(\cD)} + h \sqrt{\log(h^{-1})}\, \lVert f \rVert_{L^\infty(\cD)}&\text{for }p = \infty.
 \end{cases}
\end{align*}
Thus, randomized quadrature substantially relaxes the regularity requirements compared to deterministic rules and yields (almost) optimal rates for bounded data. Nevertheless, for merely square-integrable right-hand sides $f\notin L^{2+\varepsilon}(\cD)$ there is a significant loss of convergence induced by the quadrature error that we overcome with our smoother, as discussed in the following theorem. The theorem involves the auxiliary solution $\bar{u}_h \in V_h$ defined via the $L^2(\cD)$ orthogonal projection $\Pi_k$ onto $\mathbb{P}_k(\tria)$ by
\begin{align*}
\int_\cD \nabla \bar u_h\cdot \nabla v_h \dx = \int_\cD \Pi_k f v_h\dx \qquad\text{for all }v_h \in V_h.
\end{align*}
\begin{theorem}[Expected additional error]\label{thm:Main1}
Approximating the finite element solution $u_h \in V_h$ by $\bar{u}_h \in V_h$ leads to the additional (deterministic) error 
\begin{align*}
\lVert \nabla u_h - \nabla \bar u_h \rVert_{L^2(\cD)}^2 \leq \pi^{-2} \sum_{K\in \tria} h_K^2 \lVert f - \Pi_k f\rVert^2_{L^2(K)}.
\end{align*}
Approximating $\bar{u}_h \in V_h$ by $\hat{u}_h \in L^2(\Omega;V_h)$ leads to an additional expected error  
\begin{align*}
\mathbb{E}\big[ \lVert \nabla \bar u_h - \nabla \hat u_h \rVert_{L^2(\cD)}^2\big] \leq  \mathbb{E}\big[ \lVert \Pi_k f - \hat \Pi f \rVert_{H^{-1}(\cD)}^2\big].
\end{align*}
For $\hat{\Pi} = \hat{\Pi}_0$ and $k= 0$ the latter term is bounded by 
\begin{align*}
\mathbb{E}\big[ \lVert \Pi_0  f - \hat \Pi_0 f \rVert_{H^{-1}(\cD)}^2\big] \leq \sum_{K\in \tria} h_K^2 \frac{\vartheta(h_K)^2}{N_K} \lVert f - \Pi_0 f\rVert_{L^2(K)}^2.
\end{align*}
For $\hat{\Pi} = \hat{\tilde{\Pi}}_k$ and $k \in \mathbb{N}$ the latter term is, under the assumption that $f\in L^p(\cD)$ with $p\in (2,\infty)$ and the number of samples $M_K \geq 4k (2(1-2/p)^{-1} + m) + 1$  for all $K\in \tria$, bounded by 
\begin{align*}
&\mathbb{E}\big[\lVert \Pi_k f-\hat{\tilde{\Pi}}_k f\rVert_{H^{-1}(\cD)}^2 \big] \leq 2 \sum_{K\in \tria} h_K^2 \frac{\vartheta(h_K)^2}{N_K}  \lVert f-{\Pi}_k f \rVert_{L^2(K)}^2 \\
&\qquad\qquad\qquad+ 2\sum_{K\in \tria}
h_K^2 \Big(\pi^{-2 } + \frac{\vartheta(h_K)^2}{N_K} \Big) C_p \frac{|K|^{1-2/p}}{M_K}  \lVert f-{\Pi}_k f \rVert_{L^p(K)}^2.
\end{align*}
\end{theorem}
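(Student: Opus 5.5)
The plan is to use Galerkin orthogonality, i.e.\ the stability of the discrete solution operator, and then insert the approximation results already established.

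\textbf{Step 1: discrete stability.} For the first estimate, subtract the defining equations of $u_h$ and $\bar u_h$. The difference $e_h\coloneqq u_h-\bar u_h\in V_h$ satisfies
\begin{align*}
\int_\cD \nabla e_h\cdot\nabla v_h\dx=\int_\cD (f-\Pi_k f)v_h\dx\qquad\text{for all }v_h\in V_h.
\end{align*}
Testing with $v_h=e_h$ and using $V_h\subset H^1_0(\cD)$ gives $\lVert\nabla e_h\rVert_{L^2(\cD)}\le\lVert f-\Pi_kf\rVert_{H^{-1}(\cD)}$. Step~1 of Lemma~\ref{lem:Hm1} (equivalently, the deterministic bound in Lemma~\ref{lem:ApxHminHigherOrder}) gives $\pi^{-2}\sum_K h_K^2\lVert f-\Pi_kf\rVert_{L^2(K)}^2$. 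The only ingredient there is that $f-\Pi_kf$ is $L^2$-orthogonal to $\mathbb P_0(\tria)\subset\mathbb P_k(\tria)$, so $v$ may be replaced by $v-\Pi_0v$, followed by the elementwise Poincaré inequality on convex simplices.

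The second estimate follows the same way, realisation by realisation. The difference $\bar u_h-\hat u_h$ solves the Galerkin problem with load $\Pi_kf-\hat\Pi f$. Testing with itself yields $\lVert\nabla(\bar u_h-\hat u_h)\rVert^2\le\lVert\Pi_kf-\hat\Pi f\rVert_{H^{-1}(\cD)}^2$ almost surely; taking expectations finishes this part.

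\textbf{Step 2: the case $k=0$.} This is exactly the stochastic-term bound of Lemma~\ref{lem:Hm1}.

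\textbf{Step 3: the case $k\ge1$.} I will start from the expectation bound in Lemma~\ref{lem:ApxHminHigherOrder}, which has two sums, and control each by local $L^2$ errors via Lemma~\ref{lem:L2Estimate}.

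For the first sum, use $\lVert(1-\Pi_0)w\rVert_{L^2(K)}\le\lVert w\rVert_{L^2(K)}$ with $w=\Pi_kf-\tilde\Pi_kf$. The local estimate of Lemma~\ref{lem:L2Estimate} then gives $C_p|K|^{1-2/p}M_K^{-1}\lVert f-\Pi_kf\rVert_{L^p(K)}^2$.

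For the second sum, write $(1-\Pi_0)(f-\tilde\Pi_kf)=(1-\Pi_0)(f-\Pi_kf)+(1-\Pi_0)(\Pi_kf-\tilde\Pi_kf)$. Again drop $1-\Pi_0$ by $L^2$-contractivity. By the Pythagorean identity of Lemma~\ref{lem:L2Estimate}, the cross term vanishes after localisation, so
\begin{align*}
\mathbb E\big[\lVert f-\tilde\Pi_kf\rVert_{L^2(K)}^2\big]=\lVert f-\Pi_kf\rVert_{L^2(K)}^2+\mathbb E\big[\lVert\Pi_kf-\tilde\Pi_kf\rVert_{L^2(K)}^2\big].
\end{align*}
Applying the local bound to the second term gives the $L^p$ contribution.

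Collecting terms gives coefficient $\vartheta^2/N_K$ on the $L^2$ term and $(\pi^{-2}+\vartheta^2/N_K)$ on the $L^p$ term. The stated factor $2$ is a harmless overestimate, and I would keep it to match the statement. The sample-size assumption $M_K\ge 4k(2(1-2/p)^{-1}+m)+1$ is exactly what Lemma~\ref{lem:L2Estimate} requires elementwise.

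\textbf{Main obstacle.} The main care point is legitimacy rather than computation. The application of Lemma~\ref{lem:Hm1} inside Lemma~\ref{lem:ApxHminHigherOrder} to the random function $\tilde\delta=f-\tilde\Pi_kf$ needs conditioning on the $Y$-samples, which are independent of the $X$-samples. That is already handled in Lemma~\ref{lem:ApxHminHigherOrder}, so here I only verify that the resulting expectations are finite. This holds under $f\in L^p$ by the same lemma.
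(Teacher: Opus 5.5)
Your proposal is correct and follows essentially the paper's route. The paper tests the two Galerkin error equations with $\bar e_h$ and $\hat e_h$ themselves and applies the elementwise Poincar\'e inequality directly to $\bar e_h-\Pi_k\bar e_h$, which is equivalent to your detour through Step~1 of Lemma~\ref{lem:Hm1}; it then cites Lemma~\ref{lem:Hm1} for $k=0$ and Lemmas~\ref{lem:ApxHminHigherOrder} and~\ref{lem:L2Estimate} for $k\geq1$. Your Step~3 spells out this combination, which the paper leaves implicit, and your almost-sure cellwise Pythagorean splitting correctly shows that the factor $2$ in the statement is not needed.
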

\begin{proof}
Poincare's inequality on convex domains yields for $\bar e_h \coloneqq  u_h - \bar u_h$
\begin{align*}
\lVert \nabla \bar e_h \rVert_{L^2(\cD)}^2 &= \int_\cD (f- \Pi_k f) (\bar e_h - \Pi_k \bar e_h)\dx\\
& \leq \sum_{K\in \tria } \lVert f - \Pi_k f \rVert_{L^2(K)} \pi^{-1} h_K \lVert  \nabla \bar e_h \rVert_{L^2(K)}\\
& \leq \Big(\pi^{-2} \sum_{K\in \tria } h_K^2 \lVert f - \Pi_k f\rVert_{L^2(K)}^2 \Big)^{1/2} \lVert \nabla\bar e_h \rVert_{L^2(\cD)}. 
\end{align*}
Similarly, we obtain for  $\hat{e}_h \coloneqq \bar u_h - \hat{u}_h$ the bound
\begin{align*}
\lVert \nabla \hat e_h \rVert_{L^2(\cD)}^2 = \int_\cD (\Pi_k f - \hat{{ \Pi}} f) \hat{e}_h \dx \leq \lVert \Pi_k f - \hat \Pi f \rVert_{H^{-1}(\cD)} \lVert \nabla \hat{e}_h \rVert_{L^2(\cD)}.
\end{align*}
Taking expectations and using Lemma~\ref{lem:Hm1} for $k=0$ and Lemma~\ref{lem:ApxHminHigherOrder} and \ref{lem:L2Estimate} for $k\in \mathbb{N}$ concludes the proof.
\end{proof}
Let us interpret the theorem. Our initial goal is the computation of the solution $u_h \in V_h$ to \eqref{eq:PmpVh}. However, the computation of $u_h$ is often not possible or very expensive due to quadrature of the right-hand side. Therefore, we replace $f$ by its piecewise polynomial approximation $\Pi_k f$. This causes an additional error that is bounded by the data-oscillation. This additional term is in most applications of higher order, consequently allowing for the same convergence rates, even in adaptive mesh refinement schemes, cf.~\cite{CasconKreuzerNochettoSiebert08,CarstensenFeischlPagePreatorius14}. Unfortunately, computing $\Pi_k f$ exactly is again often challenging or impossible. Therefore, we approximate $\Pi_k f$ by $\hat{\Pi} f$, leading to a second additional error. In expectation this error is also (almost) bounded by the data-oscillation -- thus it does not spoil the convergence rate. However, in the applications in mind we do not want to compute the expectation but rather a single sample. In this case, we can bound the likelihood of large resulting errors by the following lemma.
\begin{lemma}[High-probability bound]\label{lem:Markov}
With $\mathtt{ExpErr}^2 \coloneqq \mathbb E\bigl[\lVert f-\hat \Pi f\rVert_{H^{-1}(\cD)}^2\bigr]$ one has
\begin{align*}
\mathbb P \big(\alpha\, \mathtt{ExpErr}^2 \leq \lVert f-\hat \Pi f\rVert_{H^{-1}(\cD)}^2 \big)
\leq \frac{1}{\alpha}\qquad\text{for all }\alpha > 0.
\end{align*}
\end{lemma}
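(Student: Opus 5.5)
This is Markov's inequality applied to the nonnegative random variable $Z \coloneqq \lVert f-\hat \Pi f\rVert_{H^{-1}(\cD)}^2$, and I would prove it directly.

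First I check that $Z$ is a well-defined, nonnegative, measurable random variable. For $\hat\Pi = \hat\Pi_0$ it depends continuously on finitely many sample values $f(X_i^K)$. For $\hat\Pi = \hat{\tilde\Pi}_k$ it is a measurable function of the samples, almost surely, by Lemma~\ref{lem:wellDefiniedness}. Its expectation $\mathtt{ExpErr}^2\in[0,\infty]$ is therefore defined. If $\mathtt{ExpErr}^2=\infty$, the event has the form $\{\infty\le Z\}$. Since $Z$ is almost surely finite (it is the dual norm of an $L^2(\cD)$ function), this event is a null set and the bound holds trivially.

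Now assume $\mathtt{ExpErr}^2<\infty$ and fix $\alpha>0$. If $\mathtt{ExpErr}^2=0$, then $Z=0$ almost surely and the event $\{0\le Z\}$ has probability one. Here the stated bound $1/\alpha$ only holds for $\alpha\le 1$. So for $\alpha>1$ the degenerate case must be handled separately: I would either read the event with strict inequality, or note that the statement tacitly assumes $\mathtt{ExpErr}>0$.

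In the main case $\mathtt{ExpErr}^2\in(0,\infty)$, set $t\coloneqq \alpha\,\mathtt{ExpErr}^2>0$. The pointwise inequality
\begin{align*}
t\,\indicator_{\{t\le Z\}}\le Z
\end{align*}
holds everywhere on $\Omega$. Taking expectations gives
\begin{align*}
t\,\mathbb P(t\le Z)\le \mathbb E[Z]=\mathtt{ExpErr}^2,
\end{align*}
and dividing by $t$ yields $\mathbb P(t \le Z)\le 1/\alpha$.

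The only real obstacle is the degenerate case $\mathtt{ExpErr}=0$ with $\alpha>1$. I would resolve it by stating the convention explicitly; otherwise the argument is routine.
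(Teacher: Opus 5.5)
Your proof is correct and is the same Markov-inequality argument as the paper, which writes $\alpha\,\mathtt{ExpErr}^2\,\mathbb P(\alpha\,\mathtt{ExpErr}^2\le Z)\le \mathtt{ExpErr}^2$ and then rearranges. Your caveat about the degenerate case is also right. If $\mathtt{ExpErr}=0$ (attainable, e.g., for $f\in\mathbb P_0(\tria)$ with $\hat\Pi=\hat\Pi_0$) and $\alpha>1$, the event is all of $\Omega$, so the paper's rearrangement divides by zero there; the statement needs $\mathtt{ExpErr}>0$ or a strict inequality, and the strict version does hold in all cases.
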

\begin{proof}
Since $0 \leq \lVert f-\hat \Pi f\rVert_{H^{-1}(\cD)}^2$, one has for any $\alpha > 0$ the bound
\begin{align*}
\alpha \, \mathtt{ExpErr}^2\, \mathbb P \big(\alpha\mathtt{ExpErr}^2 \leq \lVert f-\hat \Pi f\rVert_{H^{-1}(\cD)}^2 \big) \leq \mathtt{ExpErr}^2.
\end{align*}
Rearranging the terms concludes the proof.
\end{proof}
Let $\hat u_h(\omega) \in V_h$ be our computed approximation for some specific sample $\hat \Pi(\omega) f$ with $\omega \in \Omega$ as right-hand side. The lemma states in combination with Theorem~\ref{thm:Main1} that the additional squared error caused by the randomized load approximation is bounded with probability larger than $1-1/\alpha$ by
\begin{align*}
\lVert \nabla \bar u_h - \nabla \hat u_h(\omega) \rVert_{L^2(\cD)}^2 &< \alpha\, \mathbb{E}\big[ \lVert \Pi_k f - \hat \Pi f  \rVert_{H^{-1}(\cD)}^2\big],
\end{align*}
where the latter term is (almost) bounded by the data-oscillation, see Theorem \ref{thm:Main1}.
\begin{remark}[Right-hand sides in divergence form]
Our strategy and the resulting error estimates easily extend to right-hand sides in divergence form; that is, when we aim for an approximation of $u\in H^1_0(\cD)$ which solves with given $F\in L^2(\cD;\mathbb{R}^d)$ 
\begin{align*}
\int_\cD \nabla u \cdot \nabla v \dx = \int_\cD F \cdot \nabla v\dx\qquad\text{for all }v\in H^1_0(\cD).
\end{align*}
\end{remark}
\section{Numerical experiments}\label{sec:Exp}
We conclude our study with two numerical experiments. Throughout the experiments the domain $\cD = (0,1)^2$ is the unit square. We solve the discretized Poisson model problem in \eqref{eq:PmpVh} with the Lagrange finite element method; that is,
\begin{align*}
V_h \coloneqq S_0^k(\tria) \coloneqq \mathbb{P}_k(\tria) \cap H^1_0(\cD)\qquad\text{for } k \in \mathbb{N}.
\end{align*}
Our implementation uses NGSolve \cite{Schoeberl97,Schoeberl14} and can be found in \cite{StornCode}.
\subsection{Oscillating right-hand side}\label{subsec:Oscillation}
In this experiment we solve the Poisson model problem with oscillating right-hand side
\begin{align}\label{eq:rhs}
f(x,y) \coloneqq |\sin(\pi 2^L 3 x)|\qquad\text{and }L \coloneqq 5\qquad\text{for all }(x,y)\in \cD.
\end{align}
We use the lowest-order Lagrange space $V_h \coloneqq S^1_0(\tria)$ and evaluate the load $\int_{\cD} f v_h \dx$ for all $v_h \in V_h$ as follows:
\begin{enumerate}
\item We use the NGSolve routine $\lstinline{f*v*dx(bonus_intorder=r)}$ to evaluate the load numerically, which is with $r=0$ exact for polynomials up to the default integration order of NGSolve, leading to the results displayed on the left-hand side of Figure~\ref{fig:Exp1}. By using the parameter $r = 10$ we increase the order of polynomial exactness by 10, leading to the results on the right-hand side of Figure~\ref{fig:Exp1}.\label{itm:1}
\item We approximate $f$ by $\Pi_0 f$ and evaluate the load $\int_\cD \Pi_0 f\, v_h \dx$ exactly.  The piecewise constant function $\Pi_0 f$ is approximated by midpoint quadrature (Figure~\ref{fig:Exp1}, left) and by the numerically computed $L^2$ projection with right-hand side computed as in \ref{itm:1} with $r=10$ (Figure~\ref{fig:Exp1}, right).
\item We use the randomized projection operator $\hat \Pi_0 f$ and evaluate the load $\int_\cD \hat \Pi_0 f\, v_h \dx$ exactly. We use $N = 1$ samples per element (Figure~\ref{fig:Exp1}, left) and $N=20$ samples per element (Figure~\ref{fig:Exp1}, right)
\end{enumerate}
The underlying triangulations $\tria$ were obtained by uniform mesh refinements, leading to triangulations $\tria_0 \leq \dots \leq \tria_7$. In order to compute the error, we approximate the exact solution $u\in H^1_0(\cD)$ by the finite element solution $u \approx u_\textup{ref} \in S_0^1(\tria_{+})$, where $\tria_{+}$ results from two further uniform refinements of the finest mesh $\tria_7$ and a right-hand side approximated by $\hat{\Pi}_0 f$ with $N=100$ samples per element.
The resulting convergence history plot in Figure~\ref{fig:Exp1} illustrates that despite the highly oscillatory right-hand side the randomized method converges with the expected rate of $\mathcal{O}(\textup{ndof}^{-1/2})$ with $\textup{ndof} \coloneqq \dim V_h$ without any pre-asymptotic regime -- even with just $N=1$ samples per element.
Both lower order deterministic methods experience a pre-asymptotic regime without convergence -- a regime that gets larger when $L$ in \eqref{eq:rhs} is increased. The lack of convergence can be remedied by increasing the order of the deterministic quadrature schemes; however, it remains a regime where in particular the $L^2$ error of the randomized scheme remains smaller. 
\begin{figure}[ht!]
\begin{tikzpicture}
\begin{axis}[
clip=false,
width=.5\textwidth,
height=.45\textwidth,
ymode = log,
xmode = log,
xlabel = {$\textup{ndof}$},
cycle multi list={\nextlist MyColors},
scale = {1},
clip = true,
legend cell align=left,
legend style={legend columns=1,legend pos= south west,font=\fontsize{7}{5}\selectfont}
]
	\addplot table [x=ndof,y=H1err] {Experiments/Exp1_LO.txt};
	\addplot table [x=ndof,y=H1err_det] {Experiments/Exp1_LO.txt};	
	\addplot table [x=ndof,y=H1err_mc] {Experiments/Exp1_LO.txt};
	\addplot table [x=ndof,y=L2err] {Experiments/Exp1_LO.txt};
	\addplot table [x=ndof,y=L2err_det] {Experiments/Exp1_LO.txt};
	\addplot table [x=ndof,y=L2err_mc] {Experiments/Exp1_LO.txt};
	\legend{{$f$},{$\Pi_0 f$},{$\hat{\Pi}_0 f$}};
	\addplot[dash dot,sharp plot,update limits=false] coordinates {(1e1,1e0) (1e5,1e-2)};
\end{axis}
\end{tikzpicture}
\begin{tikzpicture}
\begin{axis}[
clip=false,
width=.5\textwidth,
height=.45\textwidth,
ymode = log,
xmode = log,
xlabel = {$\textup{ndof}$},
cycle multi list={\nextlist MyColors},
scale = {1},
clip = true,
legend cell align=left,
legend style={legend columns=1,legend pos= south west,font=\fontsize{7}{5}\selectfont}
]
	\addplot table [x=ndof,y=H1err] {Experiments/Exp1_HO.txt};
	\addplot table [x=ndof,y=H1err_det] {Experiments/Exp1_HO.txt};
	\addplot table [x=ndof,y=H1err_mc] {Experiments/Exp1_HO.txt};
	\addplot table [x=ndof,y=L2err] {Experiments/Exp1_HO.txt};
	\addplot table [x=ndof,y=L2err_det] {Experiments/Exp1_HO.txt};
	\addplot table [x=ndof,y=L2err_mc] {Experiments/Exp1_HO.txt};
	\legend{{$f$},{$\Pi_0 f$},{$\hat{\Pi}_0 f$}};
	\addplot[dash dot,sharp plot,update limits=false] coordinates {(1e1,1e0) (1e5,1e-2)};
\end{axis}
\end{tikzpicture}
\caption{Convergence history plot of the relative errors with respect to $\lVert \nabla \bigcdot \rVert_{L^2(\cD)}$ (solid line) and $\lVert \bigcdot \rVert_{L^2(\cD)}$ (dotted line) for various approximations of the right-hand side in the experiment of Section~\ref{subsec:Oscillation}. The dash-dotted line illustrates the slope $\textup{ndof}^{-1/2}$. On the left we evaluated $f$ using the standard NGSolve settings, $\Pi_0 f$ using midpoint quadrature, and $\hat{\Pi}_0 f$ using $N=1$ samples per element. On the right, we increased the standard NGSolve polynomial order of accuracy by 10, approximated $\Pi_0 f$ by a routine at least exact for polynomials of degree 10, and used $N= 20$ samples per element for the evaluation of $\hat{\Pi}_0 f$.  }\label{fig:Exp1}
\end{figure}
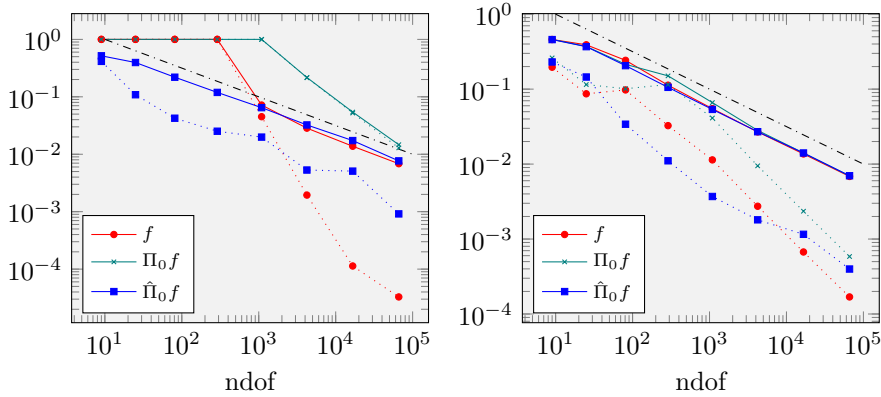
\subsection{Waterfall benchmark}\label{subsec:Waterfall}
In our second experiment we solve the benchmark problem from \cite[Sec.~4.2]{CarstensenGallistlHellwigWeggler14} with exact solution 
\begin{align*}
u(x,y) = x(x-1)y(y-1) \exp(-100(x-1/2)^2 - (y-117)^2 10^{-4}).
\end{align*}
We use quadratic finite elements  $V_h = S_0^2(\tria)$ and the standard adaptive mesh refinement routine with D\"orfler marking using the bulk parameter $\theta$ and residual-type error estimator as described in~\cite{Stevenson07}. We approximate the load as follows:
\begin{enumerate}
\item The right-hand sides $f$, $\Pi_0 f$, and $\hat{\Pi}_0 f$ are computed as in Section~\ref{subsec:Oscillation} (with the higher order parameters).
\item The approximation $\hat{\tilde{\Pi}}_1 f$ is computed with $M = 25$ samples per element for the calculation of $\tilde{\Pi}_1$ and $N = 10$ samples per element for the computation of the corrector $\hat{\Pi}_0$. 
\end{enumerate}
Figure~\ref{fig:Exp2} illustrates the convergence history of the resulting numerical approximations with respect to the $H^1(\cD)$ semi-norm (left) and the $L^2(\cD)$ norm (right). In both cases the results for the deterministic quadrature $f$ and the randomized approximation $\hat{\tilde{\Pi}}_1 f$ behave similarly.
The piecewise constant approximations $\Pi_0 f$ and $\hat{\Pi}_0 f$ result in slightly worse approximations, illustrating the benefits of higher-order randomized smoothers. This effect can be asymptotically emphasized even more when one considers finite element spaces of higher polynomial degree, however, the expected optimal rate of convergence is not attained in the observed pre-asymptotic regime. Figure~\ref{fig:Exp2} contains convergence history plots of three realizations of $\hat{\tilde{\Pi}}_1 f$. They differ slightly on coarse meshes, but the impact of the randomization seems to decrease as the mesh is refined. 

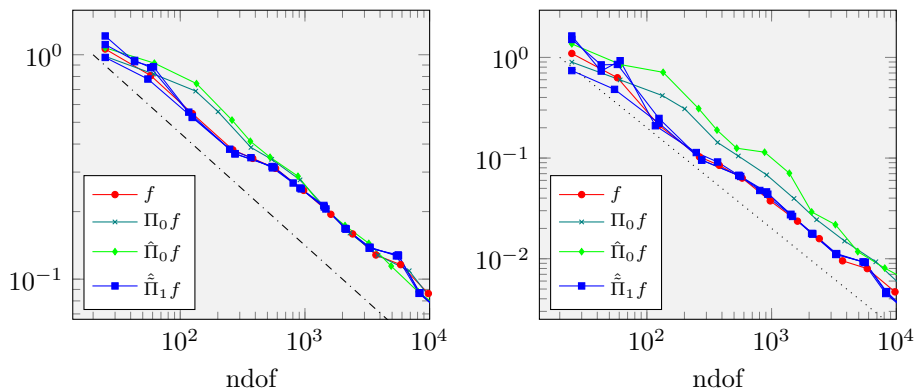
\begin{figure}[ht!]
\begin{tikzpicture}
\begin{axis}[
clip=false,
width=.5\textwidth,
height=.45\textwidth,
ymode = log,
xmode = log,
xlabel = {$\textup{ndof}$},
xmax = 10000,
cycle multi list={\nextlist MyColors2},
scale = {1},
clip = true,
legend cell align=left,
legend style={legend columns=1,legend pos= south west,font=\fontsize{7}{5}\selectfont}
]

	\addplot table [x=ndof,y=H1error] {Experiments/Exp2_HO_det.txt};
	\addplot table [x=ndof,y=H1error] {Experiments/Exp2_HO_Pi0_det.txt};	
	\addplot table [x=ndof,y=H1error] {Experiments/Exp2_LO_MC_1.txt};
	
	\addplot table [x=ndof,y=H1error] {Experiments/Exp2_HO_MC_1.txt};
	\addplot table [x=ndof,y=H1error] {Experiments/Exp2_HO_MC_2.txt};
	\addplot table [x=ndof,y=H1error] {Experiments/Exp2_HO_MC_3.txt};

	\addplot[dash dot,sharp plot,update limits=false] coordinates {(2e1,1e0) (2e5,1e-2)};
	\legend{{$f$},{$\Pi_0f$},{$\hat{\Pi}_0f$},{$\hat{\tilde{\Pi}}_1 f$}};
\end{axis}
\end{tikzpicture}
\begin{tikzpicture}
\begin{axis}[
clip=false,
width=.5\textwidth,
height=.45\textwidth,
ymode = log,
xmode = log,
xlabel = {$\textup{ndof}$},
xmax = 10000,
cycle multi list={\nextlist MyColors2},
scale = {1},
clip = true,
legend cell align=left,
legend style={legend columns=1,legend pos= south west,font=\fontsize{7}{5}\selectfont}
]
	\addplot table [x=ndof,y=L2error] {Experiments/Exp2_HO_det.txt};
	\addplot table [x=ndof,y=L2error] {Experiments/Exp2_HO_Pi0_det.txt};
		
	\addplot table [x=ndof,y=L2error] {Experiments/Exp2_LO_MC_1.txt};
	
	\addplot table [x=ndof,y=L2error] {Experiments/Exp2_HO_MC_1.txt};
	\addplot table [x=ndof,y=L2error] {Experiments/Exp2_HO_MC_2.txt};
	\addplot table [x=ndof,y=L2error] {Experiments/Exp2_HO_MC_3.txt};

	\addplot[dotted,sharp plot,update limits=false] coordinates {(2e1,1e0) (2e5,1e-4)};
	
	\legend{{$f$},{$\Pi_0f$},{$\hat{\Pi}_0f$},{$\hat{\tilde{\Pi}}_1 f$}};
\end{axis}
\end{tikzpicture}

\caption{Convergence history of the relative errors with respect to $\lVert \nabla \bigcdot \rVert_{L^2(\cD)}$ (left) and $\lVert \bigcdot \rVert_{L^2(\cD)}$ (right) for various approximations of the right-hand side in the experiment of Section~\ref{subsec:Waterfall}. We display three different realizations of $\hat{\tilde{\Pi}}_1 f$.
The dash-dotted line illustrates the rate $\mathcal{O}(\textup{ndof}^{-1/2})$, the dotted line illustrates $\mathcal{O}(\textup{ndof}^{-1})$.} \label{fig:Exp2}
\end{figure}%

\appendix
\section{Bounds for $\lVert \hat G_K^{-1}\rVert$}
In this appendix we verify the auxiliary result needed in the proof of Lemma~\ref{lem:L2Estimate}. For this, we fix $K\in \tria$ and recall the basis $(\psi_{1},\dots,\psi_{m})$ of $\mathbb{P}_k(K)$ being orthonormal in the sense of \eqref{eq:orthoBasis}. These basis functions define with $N\geq m$ uniformly and independently distributed random variables $X_1,\dots,X_{N} \sim \mathcal{U}(K)$ the empirical Gram matrix
\begin{align*}
G \coloneqq \frac{1}{N} \sum_{i=1}^{N} \psi(X_i) \psi(X_i)^\top.
\end{align*} 
Let $\lambda_\textup{min}(G) \geq 0$ denote its smallest eigenvalue and recall the constant $\Lambda_k < \infty$ defined in Lemma~\ref{lem:ChristoffelTypeQuant}. Moreover, recall the notation 
\begin{align*}
\psi(x) \coloneqq \big(\psi_1(x),\dots,\psi_m(x)\big)^\top \in \mathbb{R}^m\qquad\text{for all }x\in K.
\end{align*}
\begin{lemma}[Tail estimate for the inverse empirical Gram matrix]\label{lem:tail_Ginv}
Let $k > 0$.
There exist constants $\varepsilon_0 = \varepsilon_0(d,k) \in(0,1)$ and $C=C(d,k)<\infty$ such that
\begin{align*}
\mathbb P\big(\lambda_{\min}(G)\leq \varepsilon\big) \leq 12^m\Lambda_k^m C^N \varepsilon^{\frac{N}{4k}-m}\qquad\text{for all }\varepsilon\in(0,\varepsilon_0].
\end{align*}
Equivalently, we have
\begin{align*}
\mathbb P\big(t \leq \lVert G^{-1}\rVert\big) \leq 12^m \Lambda_k^m C^N t^{-\left(\frac{N}{4k}-m\right)}\qquad\text{for all }t\geq 1/\varepsilon_0.
\end{align*}
\end{lemma}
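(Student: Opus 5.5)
We will prove the tail bound by an $\varepsilon$-net argument on the unit sphere $S^{m-1}\subset\mathbb R^m$, combined with a small-ball estimate for single polynomials and independence of the samples. Since $\lambda_{\min}(G)=\min_{z\in S^{m-1}} z^\top G z = \min_z \frac1N\sum_{i=1}^N (z\cdot\psi(X_i))^2$, the event $\lambda_{\min}(G)\le\varepsilon$ means that some unit coefficient vector $z$ produces a polynomial $p_z=z\cdot\psi$ with $\|p_z\|_{L^2(K)}^2=|K|$ that is small at all sample points on average. The equivalent formulation in terms of $\|G^{-1}\|=\lambda_{\min}(G)^{-1}$ then follows by substituting $t=1/\varepsilon$.

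\textbf{Step 1 (small-ball estimate for a fixed polynomial).} We show that there is $C_1=C_1(d,k)$ with
\begin{align*}
\big|\{x\in K\colon |p(x)|\le s\,\|p\|_{L^\infty(K)}\}\big|\le C_1\,|K|\, s^{1/k}\qquad\text{for all }p\in\mathbb P_k(K)\setminus\{0\},\ s\in(0,1].
\end{align*}
This is a Remez-type (or Carbery--Wright-type) sublevel estimate. We would reduce it to the reference simplex by an affine map and prove it there by restricting to lines, using the one-dimensional Remez inequality, or simply quote it. Combined with $\|p_z\|_{L^\infty(K)}^2\ge |K|^{-1}\|p_z\|_{L^2(K)}^2=1$ and Lemma~\ref{lem:ChristoffelTypeQuant}, this gives for $z\in S^{m-1}$ and a single uniform sample $X$
\begin{align*}
\mathbb P\big((z\cdot\psi(X))^2\le \eta\big)\le C_1\eta^{1/(2k)}.
\end{align*}

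\textbf{Step 2 (independence and averaging).} If $\frac1N\sum_i (z\cdot\psi(X_i))^2\le 2\varepsilon$, then at least $N/2$ indices satisfy $(z\cdot\psi(X_i))^2\le 4\varepsilon$. A union bound over the at most $2^N$ index subsets, together with independence, yields
\begin{align*}
\mathbb P\Big(\tfrac1N\textstyle\sum_i (z\cdot\psi(X_i))^2\le 2\varepsilon\Big)\le 2^N\big(C_1(4\varepsilon)^{1/(2k)}\big)^{N/2}\le C^N\varepsilon^{N/(4k)}.
\end{align*}
This is the origin of the exponent $N/(4k)$.

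\textbf{Step 3 (net and Lipschitz control).} Let $\mathcal N$ be a $\delta$-net of $S^{m-1}$ with $|\mathcal N|\le(3/\delta)^m$. For $z,z'\in S^{m-1}$,
\begin{align*}
\big|\sqrt{z^\top Gz}-\sqrt{z'^\top Gz'}\big|\le \|G\|^{1/2}|z-z'|\le\Lambda_k^{1/2}\delta,
\end{align*}
because $\|G\|\le\sup_x|\psi(x)|^2=\Lambda_k$ deterministically. If $\lambda_{\min}(G)\le\varepsilon$, the minimizing $z$ has a net point $z'$ with $\sqrt{z'^\top Gz'}\le\sqrt\varepsilon+\Lambda_k^{1/2}\delta$. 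We choose $\delta=\sqrt{\varepsilon/(4\Lambda_k)}$, which gives $z'^\top Gz'\le 2\varepsilon$ up to a constant factor that we adjust. The union bound over $\mathcal N$ then gives
\begin{align*}
(3/\delta)^m C^N\varepsilon^{N/(4k)}\lesssim 12^m\Lambda_k^{m/2}\varepsilon^{-m/2}C^N\varepsilon^{N/(4k)}.
\end{align*}
For $\varepsilon\le\varepsilon_0<1$ and $\Lambda_k\ge1$, this is bounded by the stated $12^m\Lambda_k^mC^N\varepsilon^{N/(4k)-m}$.

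\textbf{Main obstacle.} The main difficulty is Step 1: the sublevel-set bound with the exponent $1/k$ must hold uniformly over all polynomial directions and all simplices. We would prove it on the reference simplex via the one-dimensional Remez inequality applied along segments through a maximum point, followed by a Fubini-type integration in polar coordinates. Everything else is bookkeeping of constants.
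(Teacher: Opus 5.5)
Your proposal is correct and follows the paper's proof in all essentials: a Carbery--Wright (equivalently Remez-type) small-ball bound for a fixed direction $p_z=z\cdot\psi$, the observation that at least half of the samples must then be small, combined with independence and a $2^N$ union bound (the source of the exponent $N/(4k)$), and finally a $\delta$-net on $S^{m-1}$ with a deterministic Lipschitz bound. The only deviation is that you control the seminorm $z\mapsto\sqrt{z^\top Gz}$ with $\delta\sim\sqrt{\varepsilon/\Lambda_k}$ instead of the quadratic form $z\mapsto z^\top Gz$ with $\delta=\varepsilon/(4\Lambda_k)$; this gives the slightly sharper exponent $N/(4k)-m/2$, which implies the stated bound because $\Lambda_k\ge m\ge 1$ and $\varepsilon<1$.
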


\begin{proof}
\textit{Step 1 (Small-ball estimate).}
Let us fix some $u\in S^{m-1}\coloneqq \lbrace v\in \mathbb{R}^m \colon \lVert v \rVert_{\ell^2} = 1\rbrace$ and define the polynomial $p_u(x)\coloneqq u\cdot  \psi(x) \in\mathbb P_k(K)$. The orthonormality of the basis yields $\mathbb E[p_u(X)^2]= \lVert u \rVert_{\ell^2}^2 = 1$.
By the Carbery--Wright inequality \cite[Thm.~8]{CarberyWright01}, there exists a uniformly bounded constant $C_0=C_0(d,k) < \infty$ such that
\begin{align}\label{eq:CW_used}
\mathbb P\big(|p_u(X)|\leq \tau\big)\le C_0\,\tau^{1/k} \qquad\text{for all }\tau > 0.
\end{align}
We fix $\varepsilon > 0$, set $\tau\coloneqq\sqrt{2\varepsilon}$, and abbreviate
$\mathbb{P}_{\varepsilon,u} \coloneqq \mathbb P(|p_u(X)|\leq \sqrt{2\varepsilon})\le C_0(2\varepsilon)^{1/(2k)}$.
We have the identities
\begin{align*}
u^\top G u = \frac{1}{N}\sum_{i=1}^{N} \big(u\cdot  \psi(X_i)\big)^2 = \frac{1}{N}\sum_{i=1}^{N}p_u(X_i)^2 .
\end{align*}
Consequently, if $u^\top G u \leq \varepsilon$, at least $\lceil N/2 \rceil$ indices must satisfy
$|p_u(X_i)|\leq \sqrt{2\varepsilon}$.
Therefore, we obtain with the bound $\binom Nj \leq 2^N $ implied by the identity $\sum_{j=0}^N \binom Nj = 2^N$ as well as $2^N \leq 4^j$ for all $j\geq \lceil N/2 \rceil$ the estimate
\begin{align*}
\mathbb P\big(u^\top G u\leq \varepsilon\big) &\leq \mathbb P\Big(\lceil N/2 \rceil \leq \#\{i = 1,\dots, N\colon  |p_u(X_i)|\leq \sqrt{2\varepsilon}\}\Big)\\
&\leq \sum_{j=\lceil N/2\rceil}^{N}\binom Nj\,\mathbb{P}_{\varepsilon,u}^j \leq \sum_{j=\lceil N/2\rceil}^{N}(4\mathbb{P}_{\varepsilon,u})^j .
\end{align*}
Let $\varepsilon_0\in (0,1)$ be sufficiently small such that $\mathbb{P}_{\varepsilon,u} \leq C_0(2\varepsilon)^{1/(2k)} \leq 1/8$ for all $\varepsilon\leq\varepsilon_0$. Then any $\varepsilon\in(0,\varepsilon_0]$ satisfies
with the uniformly bounded constant $C_1 = C_1(d,k) \coloneqq (2^{1/(2k)}8C_0)^{1/2} < \infty$ the estimate
\begin{align}\label{eq:one_dir}
\begin{aligned}
\mathbb P\big(u^\top G u\leq \varepsilon\big) &\leq \sum_{j= \lceil N/2 \rceil}^N (4\mathbb{P}_{\varepsilon,u})^j \leq \frac{(4\mathbb{P}_{\varepsilon,u})^{\lceil N/2\rceil}}{1 - 4\mathbb{P}_{\varepsilon,u}} \leq  (8\mathbb{P}_{\varepsilon,u})^{N/2} \\
& \leq  \big(8C_0 (2\varepsilon)^{1/(2k)} \big)^{N/2}  = C_1^{N}\,\varepsilon^{N/(4k)}.
\end{aligned}
\end{align}

\textit{Step 2 (Net argument).}
Define for any $u\in S^{m-1}$ the function $F(u)\coloneqq u^\top G u$. Moreover, set $v_i \coloneqq \psi(X_i) \in L^2(\Omega;\mathbb R^m)$ for $i=1,\dots,N$. We have $\mathbb E[v_iv_i^\top]=\identity_m$  by orthonormality, $\lVert v_i \rVert_{\ell^2}^2\leq \Lambda_k$ by Lemma~\ref{lem:ChristoffelTypeQuant}, and by definition
\begin{align*}
G = \frac{1}{N} \sum_{i=1}^N v_i v_i^\top \qquad\text{and thus }\qquad F(u) = \frac{1}{N} \sum_{i=1}^N (v_i \cdot u)^2 .
\end{align*}
We obtain for each $u,u'\in S^{m-1}$ the estimate
\begin{align*}
\big|(v_i\cdot u)^2- (v_i\cdot u')^2\big| &=|\big( v_i\cdot (u+u')\big)\big( v_i\cdot (u-u')\big)| \leq \lVert v_i\rVert_{\ell^2}^2\,\lVert u+u'\rVert_{\ell^2}\,\lVert u-u'\rVert_{\ell^2} \\
&\leq 2\Lambda_k\,\lVert u-u'\rVert_{\ell^2}.
\end{align*}
This shows the Lipschitz continuity
\begin{align*}
|F(u)-F(u')|\leq 2\Lambda_k\lVert u-u'\rVert_{\ell^2}\qquad\text{for all } u,u'\in S^{m-1}.
\end{align*}
Let $\delta\coloneqq \varepsilon/(4\Lambda_k)$ with $\varepsilon < \min\lbrace 2\varepsilon_0/3, 4 \Lambda_k\rbrace$ and let $\mathcal N_\delta\subset S^{m-1}$ be a $\delta$-net.
If $\lambda_{\min}(G)=\inf_{u\in S^{m-1}}F(u)\leq \varepsilon$, choose $u\in S^{m-1}$ with $F(u)\le \varepsilon$ with neighbor $u'\in\mathcal N_\delta$ satisfying $\lVert u-u'\rVert_{\ell^2}\leq\delta$.
By Lipschitz continuity we obtain the bound
\begin{align*}
F(u')\leq F(u)+2\Lambda_k\delta\leq \varepsilon+\frac{\varepsilon}{2}=\frac32\varepsilon.
\end{align*}
This observation and \eqref{eq:one_dir} yield
\begin{align*}
\mathbb P\big(\lambda_{\min}(G)\leq \varepsilon\big) \leq \sum_{u'\in\mathcal N_\delta}\mathbb P\Big(F(u')\leq \frac32\varepsilon\Big) \leq |\mathcal N_\delta|\, C^N_1\,\Big(\frac32\Big)^{N/(4k)} \,\varepsilon^{N/(4k)}.
\end{align*}
Since $\delta=\varepsilon/(4\Lambda_k)\leq 1$, the net cardinality is bounded by \cite[Cor.~4.2.11]{Vershynin18}
\begin{align*}
|\mathcal N_\delta|\le \Big(\frac{3}{\delta}\Big)^m=\Big(\frac{12\,\Lambda_k}{\varepsilon}\Big)^m.
\end{align*}
Combining these statements leads with $C_2 \coloneqq C_1(3/2)^{1/(4k)}$ to the bound 
\begin{align}\label{eq:tail_lmin_clean}
\mathbb P\big(\lambda_{\min}(G)\le \varepsilon\big) \leq (12\Lambda_k)^mC_2^N \varepsilon^{N/(4k)-m}.
\end{align}
This yields the first statement of the lemma. Since $\lVert G^{-1}\rVert=1/\lambda_{\min}(G)$, the second statement follows with $\varepsilon\coloneqq 1/t$.
\end{proof}
The bound for the tail has a factor $C^N$ that grows fast in $N$. This growth can be compensated by the even faster decay in $t$, as displayed in the following corollary.
\begin{corollary}[$N$-robust bound]\label{cor:NrobustBound}
Let $N \geq N_0 \geq m$ and let $k > 0$. Then there exist constants $c = c(d,k) < \infty$ and $C_\star = C_\star(d,k,N_0) < \infty$ such that 
\begin{align*}
\mathbb P\big(t \leq \lVert G^{-1}\rVert\big) \leq C_\star t^{-\left(\frac{N_0}{4k}-m\right)}\qquad\text{for all }t\geq c.
\end{align*}
\end{corollary}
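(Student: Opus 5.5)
The key point is that the $N$-dependence of the tail in Lemma~\ref{lem:tail_Ginv} can be traded for a monotonicity argument in the sample size. I would not apply Lemma~\ref{lem:tail_Ginv} with the full sample size $N$, because of the factor $C^N$. Instead I would exploit that the empirical Gram matrix can only get better, up to a harmless factor, when more samples are added. Split the $N$ samples into $q \coloneqq \lfloor N/N_0\rfloor \geq 1$ disjoint blocks of size $N_0$, plus a remainder. Denote by $G^{(1)},\dots,G^{(q)}$ the empirical Gram matrices of the blocks. Every summand $\psi(X_i)\psi(X_i)^\top$ is positive semidefinite, so in the Loewner order
\begin{align*}
G \succeq \frac{N_0}{N}\sum_{l=1}^{q} G^{(l)} \succeq \frac{N_0}{N}\, G^{(1)} \qquad\text{and also}\qquad G\succeq \frac{qN_0}{N}\,\frac1q\sum_{l=1}^q G^{(l)}.
\end{align*}
Since $qN_0/N \geq 1/2$, we get $\lambda_{\min}(G)\geq \tfrac12 \max_l \tfrac{N_0}{N}\lambda_{\min}(G^{(l)})$. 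Using only the first block loses a factor $N_0/N$ that is not uniform in $N$, so I will use the averaged form instead:
\begin{align*}
\lambda_{\min}(G)\ \geq\ \tfrac12\,\lambda_{\min}\Big(\tfrac1q\sum_{l} G^{(l)}\Big).
\end{align*}

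The difficulty is that the averaged matrix is again a Gram matrix with $qN_0$ samples, so the direct route would reintroduce $C^{qN_0}$. I would therefore go back to the proof of Lemma~\ref{lem:tail_Ginv}, namely the small-ball estimate \eqref{eq:one_dir} followed by the net argument, and redo it with the counting threshold taken \emph{per block}. If $u^\top G u\leq\varepsilon$, then, because $u^\top Gu \ge \frac{N_0}{2N}\sum_l u^\top G^{(l)}u$, at least $\lceil q/2\rceil$ blocks satisfy $u^\top G^{(l)}u\leq 4\varepsilon$. Each block is i.i.d., and \eqref{eq:one_dir} gives $P_\varepsilon\coloneqq \mathbb P(u^\top G^{(l)}u\leq 4\varepsilon)\leq C_1^{N_0}(4\varepsilon)^{N_0/(4k)}$. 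The binomial counting of Step~1 then yields
\begin{align*}
\mathbb P(u^\top G u\le\varepsilon)\le (8P_\varepsilon)^{q/2}\qquad\text{for } P_\varepsilon\le 1/8 .
\end{align*}
This requirement $P_\varepsilon\le 1/8$ fixes $c=c(d,k)$ via $\varepsilon\le 1/c$. The condition must be made independent of $N_0$, for instance by first reducing to $\varepsilon^{1/(4k)}\le (8\cdot 4^{1/(4k)} C_1)^{-1}$, so that $8P_\varepsilon \le (8\cdot 4^{1/(4k)}C_1\varepsilon^{1/(4k)})^{N_0}\cdot 8^{1-N_0}\le 1$.

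The net argument of Step~2 carries over verbatim with $\delta=\varepsilon/(4\Lambda_k)$. It gives
\begin{align*}
\mathbb P(\lambda_{\min}(G)\le\varepsilon)\le (12\Lambda_k/\varepsilon)^m\,\big(8 C_1^{N_0}(6\varepsilon)^{N_0/(4k)}\big)^{q/2}.
\end{align*}
For $\varepsilon\le 1/c$, the base satisfies $8C_1^{N_0}(6\varepsilon)^{N_0/(4k)} \le B_{N_0}\,\varepsilon^{N_0/(4k)}$ and is at most $1$. The $(q/2)$-th power is therefore at most the first power, $(\cdot)^{q/2}\le (\cdot)^{1/2}$ for $q\geq1$. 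This gives $\mathbb P(\lambda_{\min}(G)\le\varepsilon)\le C_\star\,\varepsilon^{N_0/(8k)-m}$ with $C_\star$ depending on $d,k,N_0$ only. Substituting $\varepsilon=1/t$ finishes.

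The main obstacle is visible here: the block argument halves the exponent to $N_0/(8k)$, while the corollary claims $N_0/(4k)$. To recover the stated exponent, I would sharpen the counting threshold. Instead of requiring half of the blocks to be bad, I would require a fraction $1-\eta$ of them, using a Markov-type split $u^\top G^{(l)}u\le \varepsilon/\eta$. This gives exponent $(1-\eta)N_0/(4k)$. Alternatively, I would simply apply the same reasoning with $N_0$ replaced by $2N_0$ blocks where possible. If neither closes the gap, I would fall back on the cruder route: for $N\le N_1(N_0)$, use Lemma~\ref{lem:tail_Ginv} directly with $C^N\le C^{N_1}$ and $t^{-(N/(4k)-m)}\le t^{-(N_0/(4k)-m)}$ for $t\geq1$. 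For large $N$, I would observe that the decay $t^{-(N/(4k)-m)}C^N$ with $t\ge c\ge C^{8k}$ is bounded by $t^{-(N/(8k)-m)}$, and $N/(8k)\geq N_0/(4k)$ once $N\geq 2N_0$. This route seems cleanest: choose $c\coloneqq\max\{1/\varepsilon_0, C^{8k}\}$. Then for $N\ge 2N_0$ one has $C^N t^{-N/(4k)}\le t^{-N/(8k)}\le t^{-N_0/(4k)}$. For $N_0\le N<2N_0$, the factor $C^N\le C^{2N_0}$ is absorbed into $C_\star$. I would adopt this simpler route.
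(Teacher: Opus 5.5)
Your final route is correct and is essentially the paper's proof. It applies Lemma~\ref{lem:tail_Ginv} directly, chooses $c=c(d,k)$ large enough that the factor $C^N$ is dominated by the decay in $t$, and absorbs the remaining $N_0$-dependence into $C_\star$. The paper does this more economically by taking $c=\max\{1/\varepsilon_0,C^{4k}\}$, so that $C^N t^{-N/(4k)}=(Ct^{-1/(4k)})^N\le (Ct^{-1/(4k)})^{N_0}$ holds for all $N\ge N_0$ at once; this avoids your case split at $2N_0$ and makes the abandoned block argument unnecessary.
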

\begin{proof}
Let $C = C(d,k) < \infty$ and $\varepsilon_0\in (0,1)$ denote the constants in Lemma~\ref{lem:tail_Ginv}.
Then we have for all $t \geq \max\lbrace 1/\varepsilon_0,C^{4k}\rbrace$ that
\begin{align*}
\frac{C}{t^{1/(4k)}} \leq 1,\qquad\text{implying}\qquad
\Big( \frac{C}{t^{1/(4k)}} \Big)^{N} \leq \Big( \frac{C}{t^{1/(4k)}} \Big)^{N_0}.
\end{align*}
Combining this observation with Lemma~\ref{lem:tail_Ginv} verifies for all  $t \geq \max\lbrace 1/\varepsilon_0,C^{4k}\rbrace$  that
\begin{align*}
&\mathbb P\big(t \leq \lVert G^{-1}\rVert\big) \leq 12^m \Lambda_k^m C^N t^{-\left(\frac{N}{4k}-m\right)} = 12^m \Lambda_k^m t^m \Big(\frac{C}{t^{1/(4k)}}\Big)^N \\
& \qquad\leq 12^m \Lambda_k^m t^m \Big(\frac{C}{t^{1/(4k)}}\Big)^{N_0} = 12^m \Lambda_k^m C^{N_0} t^{-\left(\frac{N_0}{4k}-m\right)}.
\end{align*}
This concludes the proof.
\end{proof}

\subsection*{Acknowledgment}
The author would like to thank Mario Ullrich for helpful discussions and for drawing his attention to several relevant references on randomized approximation and least-squares methods.
\printbibliography

\end{document}